\g@addto@macro\bfseries{\boldmath}
\newtheorem{theorem}{Theorem}
\newtheorem{conjecture}[theorem]{Conjecture}
\newtheorem{corollary}[theorem]{Corollary}
\newtheorem{lemma}[theorem]{Lemma}
\newtheorem{proposition}[theorem]{Proposition}
\newtheorem{question}[theorem]{Question}
\newcommand{\thistheoremname}{}
\newtheorem*{genericthm*}{\thistheoremname}
\newenvironment{namedthm*}[1]
  {\renewcommand{\thistheoremname}{#1}%
   \begin{genericthm*}}
  {\end{genericthm*}}
\theoremstyle{remark}
\newtheorem{example}[theorem]{Example}
\newtheorem{remark}[theorem]{Remark}
\newcommand{\vast}{\bBigg@{3}}
\newcommand{\Vast}{\bBigg@{4}}
\newcommand{\vastl}{\mathopen\vast}
\newcommand{\vastr}{\mathclose\vast}
\newcommand{\Vastl}{\mathopen\Vast}
\newcommand{\Vastr}{\mathclose\Vast}
\DeclareMathOperator{\diag}{diag}
\DeclareMathOperator{\im}{im}
\DeclareMathOperator{\lcm}{lcm}
\DeclareMathOperator{\rank}{rank}
\DeclarePairedDelimiter{\abs}{\lvert}{\rvert}
\newcommand{\CD}{\mathcal{D}}
\newcommand{\CG}{\mathcal{CG}}
\newcommand{\CK}{\mathcal{K}}
\newcommand{\vd}{\mathbf{d}}
\newcommand{\vdhat}{\widehat{\vd}}
\newcommand{\vr}{\mathbf{r}}
\newcommand{\ZZ}{\mathbb{Z}}
\begin{document}

\title[Critical groups on star graphs and complete graphs]{Critical groups of arithmetical structures\\ on star graphs and complete graphs}

\author{Kassie Archer}
\address[K.~Archer]{Department of Mathematics\\ United States Naval Academy\\ Annapolis, MD 21402\\ USA}
\email{karcher@usna.edu}

\author{Alexander Diaz-Lopez}
\address[A.~Diaz-Lopez]{Department of Mathematics and Statistics\\ Villanova University\\ 800 Lancaster Ave (SAC 305)\\ Villanova, PA 19085\\ USA}
\email{alexander.diaz-lopez@villanova.edu}

\author{Darren Glass}
\address[D.~Glass]{Department of Mathematics\\ Gettysburg College\\ 300 N. Washington St\\ Gettysburg, PA 17325\\ USA}
\email{dglass@gettysburg.edu}

\author{Joel Louwsma}
\address[J.~Louwsma]{Department of Mathematics\\ Niagara University\\ Niagara University, NY 14109\\ USA}
\email{jlouwsma@niagara.edu}

\begin{abstract}
An arithmetical structure on a finite, connected graph without loops is an assignment of positive integers to the vertices that satisfies certain conditions. Associated to each of these is a finite abelian group known as its critical group. We show how to determine the critical group of an arithmetical structure on a star graph or complete graph in terms of the entries of the arithmetical structure. We use this to investigate which finite abelian groups can occur as critical groups of arithmetical structures on these graphs.
\end{abstract}

\maketitle

\section{Introduction}

An \emph{arithmetical structure} on a finite, connected, loopless graph~$G$ with vertex set $V(G)$ is a labeling of the vertices with positive integers $\vr=(r_v)_{v\in V(G)}$ and nonnegative integers $\vd=(d_v)_{v\in V(G)}$ such that, for all vertices~$v$, we have $r_vd_v=\sum r_u$, where the sum is taken over all~$u$ adjacent to~$v$, with multiplicity in the case of non-simple graphs, and where the entries of~$\vr$ have no common factor. The \emph{critical group} of an arithmetical structure is the torsion part of the cokernel of $L(G,\vd)\coloneqq\diag(\vd)-A(G)$, where $A(G)$ is the adjacency matrix of~$G$ and $\diag(\vd)$ is the diagonal matrix with the vector~$\vd$ on the diagonal, so that $\vr$ generates the kernel of $L(G,\vd)$. In this paper, we describe how to compute critical groups of arithmetical structures on star graphs and complete graphs and partially characterize the abelian groups that occur as critical groups of arithmetical structures on these graphs. 

Arithmetical structures were introduced by Dino Lorenzini~\cite{L89} to study intersections of degenerating curves in algebraic geometry. A number of papers in recent years (for example, \cite{A20, B18, CV, GW}) have studied arithmetical structures and their critical groups on various families of graphs, including path graphs, cycle graphs, bidents, and paths with a doubled edge. In particular, it has been shown that the critical groups associated to path graphs are trivial~\cite{B18} and those associated to cycle graphs~\cite{B18} or bidents~\cite{A20} are cyclic. In~\cite{L24}, Lorenzini showed that every finite, connected graph admits an arithmetical structure with trivial critical group. 

Let $S_n$ denote the star graph with $n$~leaves labeled $v_1,v_2,\dotsc, v_n$ connected to one central vertex labeled~$v_0$. For simplicity, we use $d_i$ as a shorthand for~$d_{v_i}$ and $r_i$ as a shorthand for~$r_{v_i}$. An example of an arithmetical structure on~$S_6$ is shown in Figure~\ref{fig:stargraphs}. As noted by Corrales and Valencia~\cite{CV}, arithmetical structures on~$S_n$ are in bijection with solutions in the positive integers of the equation
\begin{equation}\label{eq:diophantine}
\sum_{i=1}^n\frac{1}{d_i}=d_0.
\end{equation}
To see this, observe that, given an arithmetical structure $(\vd,\vr)$ on~$S_n$, we have $d_ir_i=r_0$ for all $i \in [n]$ and $d_0r_0=\sum_{i=1}^n r_i$. Hence,
\[\sum_{i=1}^n\frac{1}{d_i} = \sum_{i=1}^n\frac{r_i}{d_ir_i} = \sum_{i=1}^n\frac{r_i}{r_0} = d_0.\]
In the reverse direction, given a solution of~\eqref{eq:diophantine}, setting $r_0=\lcm(d_1, d_2, \dotsc, d_n)$ and $r_i=r_0/d_i$ for all $i \in [n]$ gives an arithmetical structure on~$S_n$. Solutions of~\eqref{eq:diophantine} have been much studied, sometimes under the name \emph{Egyptian fractions} with the assumption that the $d_i$'s are distinct, but many open questions about them remain. See, for example, \cite{EG, Graham, Guy} and the references therein.

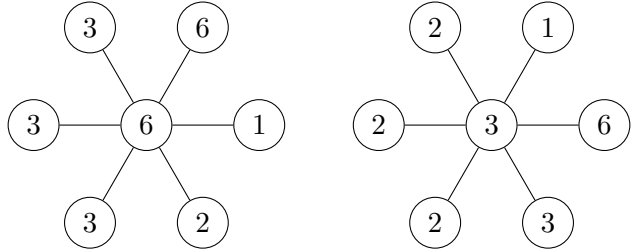
\begin{figure}
\centering
\begin{tikzpicture}
    \node[circle,draw,fill=white] at (0,0) (center) {$6$};
    \foreach \n in {1,...,1}{
        \node[circle,draw,fill=white] at ({\n*360/6}:1.5cm) (n\n) {$6$};
        \draw (center)--(n\n);}
    \foreach \n in {2,...,4}{
        \node[circle,draw,fill=white] at ({\n*360/6}:1.5cm) (n\n) {$3$};
        \draw (center)--(n\n);}
        \foreach \n in {5,...,5}{
        \node[circle,draw,fill=white] at ({\n*360/6}:1.5cm) (n\n) {$2$};
        \draw (center)--(n\n);}
        \foreach \n in {6,...,6}{
        \node[circle,draw,fill=white] at ({\n*360/6}:1.5cm) (n\n) {$1$};
        \draw (center)--(n\n);}
\end{tikzpicture}
\qquad
\begin{tikzpicture}
    \node[circle,draw,fill=white] at (0,0) (center) {$3$};
    \foreach \n in {1,...,1}{
        \node[circle,draw,fill=white] at ({\n*360/6}:1.5cm) (n\n) {$1$};
        \draw (center)--(n\n);}
    \foreach \n in {2,...,4}{
        \node[circle,draw,fill=white] at ({\n*360/6}:1.5cm) (n\n) {$2$};
        \draw (center)--(n\n);}
        \foreach \n in {5,...,5}{
        \node[circle,draw,fill=white] at ({\n*360/6}:1.5cm) (n\n) {$3$};
        \draw (center)--(n\n);}
        \foreach \n in {6,...,6}{
        \node[circle,draw,fill=white] at ({\n*360/6}:1.5cm) (n\n) {$6$};
        \draw (center)--(n\n);}
\end{tikzpicture}
\caption{An arithmetical structure on the star graph~$S_6$. The left side is the $\vr$-labeling and the right side is the $\vd$-labeling. The associated solution of~\eqref{eq:diophantine} is $\frac{1}{1}+\frac{1}{2}+\frac{1}{2}+\frac{1}{2}+\frac{1}{3}+ \frac{1}{6}=3$.\label{fig:stargraphs}}
\end{figure}

Since a star graph is a tree, a formula of Lorenzini \cite[Corollary~2.5]{L89} gives the order of the critical group of an arithmetical structure on~$S_n$ in terms of its $\vr$-labeling as 
\[\frac{r_0^{n-2}}{\prod_{i=1}^n r_i}.\]
However, critical groups of arithmetical structures on star graphs are often noncyclic, so the order does not determine the critical group. Section~\ref{sec:main} proves our main result, which finds the critical group of an arithmetical structure on a star graph in terms of its $\vd$-labeling. 
\begin{namedthm*}{Theorem~\ref{thm:criticalgroupstar}}
If $(\vd,\vr)$ is an arithmetical structure on~$S_n$ for $n\geq1$ and $\mathcal{K}(S_n;\vd,\vr)$ is its critical group, then
\[\mathcal{K}(S_n;\vd,\vr)\oplus (\mathbb{Z}/r_0\mathbb{Z})^2 \cong \bigoplus_{i=1}^n \mathbb{Z}/d_i\mathbb{Z}, \]
where $r_0=\lcm(d_1, d_2, \dotsc, d_n)$.
\end{namedthm*}
As an immediate corollary of this theorem together with the clique-star operation (see Section~\ref{sec:clique-star}), we can also compute critical groups of arithmetical structures on complete graphs in terms of~$\vd$.

In Section~\ref{sec:groupsthatappear}, we use Theorem~\ref{thm:criticalgroupstar} to investigate which groups occur as critical groups of arithmetical structures on star graphs and complete graphs. In particular, we:
\begin{itemize}
    \item construct trivial critical groups associated to $S_n$ and~$K_n$ (Section~\ref{subsec:trivial});
    \item construct infinitely many cyclic critical groups associated to star graphs and complete graphs (Section~\ref{subsec:cyclic}); 
    \item construct critical groups associated to $S_n$ and~$K_n$ with anywhere between $0$ and $n-2$ invariant factors, and show that a conjecture of Erd\H{o}s would imply that every finite abelian group is realized as a critical group associated to a star graph and complete graph (Section~\ref{subsec:highrank});
    \item construct products of critical groups associated to star graphs by identifying center vertices of smaller star graphs (Section~\ref{subsec:products});
    \item show that every finite abelian group occurs as a subgroup of a critical group associated to $S_n$ and~$K_n$ for some~$n$ (Section~\ref{subsec:subgroup});
    \item bound the maximal order of a critical group associated to $S_n$ or~$K_n$ for a given~$n$ (Section~\ref{subsec:order}); and
    \item investigate how many distinct groups appear as critical groups associated to~$S_n$ (Section~\ref{subsec:number}).
\end{itemize}

\section{Preliminaries}

\subsection{Smith normal forms and invariant factors}\label{subsec:smith}

The \emph{critical group} $\mathcal{K}(G;\vd,\vr)$ of a given arithmetical structure $(\vd,\vr)$ on a graph~$G$ is defined to be the torsion part of the cokernel of the matrix $L(G,\vd)=\diag(\vd)-A(G)$. To compute this group, we recall the following facts about the Smith normal form of a matrix. For more details, we refer the reader to \cite{GK, L08, Stanley}.

Given an $n\times n$ matrix~$M$ with integer entries, there exist invertible $n\times n$ matrices $S$ and~$T$, both with integer entries, such that the product $SMT$ is of the form
\[\scriptsize{\begin{bmatrix}
\alpha_1 & 0 & \cdots & \cdots & \cdots & \cdots & 0\\
0 & \alpha_2 & \ddots & & & & \vdots\\
\vdots & \ddots & \ddots & \ddots & & & \vdots\\
\vdots & & \ddots & \alpha_{t} & \ddots & & \vdots\\
\vdots & & & \ddots & 0 & \ddots & \vdots\\
\vdots & & & & \ddots & \ddots & 0\\
0 & \cdots & \cdots & \cdots & \cdots & 0 & 0
\end{bmatrix},}\]
where $t=\rank(M)$, each~$\alpha_k$ is a positive integer, and $\alpha_k$ divides $\alpha_{k+1}$ for all $k\in[t-1]$. This diagonal matrix is the \emph{Smith normal form} of~$M$, and the entries~$\alpha_k$ are the \emph{invariant factors} of~$M$. Thinking of~$M$ as a linear map $\mathbb{Z}^n \to \mathbb{Z}^n$, the cokernel $\mathbb{Z}^n/\im(M)$ is isomorphic to 
\[\mathbb{Z}^{n-t}\oplus\Biggl(\bigoplus_{k=1}^{t} \mathbb{Z}/\alpha_{k}\mathbb{Z}\Biggr).\]
Letting $D_{k}(M)$ be the greatest common divisor of all $k\times k$ minors of~$M$ and defining $D_0(M)=1$, one has that $\alpha_{k}=D_{k}(M)/D_{k-1}(M)$ for all $k\in[t]$. 

In the case of $L(G,\vd)$, the matrix $(\diag(\vd)-A(G))$ associated to an arithmetical structure $(\vd,\vr)$ on a graph~$G$ with $n$~vertices and adjacency matrix $A(G)$, Lorenzini \cite[Proposition~1.1]{L89} showed that $\rank(L(G,\vd))=n-1$. Since the nontorsion part of the cokernel is always~$\ZZ$, to find the cokernel we only need to compute the torsion part. The critical group of $(\vd,\vr)$ is defined to be this torsion part, namely
\[\mathcal{K}(G;\vd,\vr)\cong\bigoplus_{k=1}^{n-1} \mathbb{Z}/\alpha_{k}\mathbb{Z}.\]

\subsection{Clique-star operation}\label{sec:clique-star} 

Corrales and Valencia~\cite{CV} defined a clique-star operation on graphs with arithmetical structures that preserves the critical group and is a special case of the blowup operation described by Lorenzini~\cite{L89}. This operation replaces a clique subgraph of a graph with an arithmetical structure by a star subgraph to produce an arithmetical structure on a graph with one more vertex. Keyes and Reiter~\cite{KR} later defined an operation that generalizes the inverse of this clique-star operation, and~\cite{DL} studied how this generalized star-clique operation transforms critical groups. 

We describe the clique-star operation in the special case of the complete graph~$K_n$ with vertices $v_1,v_2,\dotsc,v_n$ and the star graph~$S_n$ with leaves $v_1,v_2,\dotsc,v_n$ connected to a central vertex~$v_0$. Suppose $\vd=(d_1,d_2,\dotsc, d_n)$ and $\vr=(r_1,r_2,\dotsc, r_n)$ give an arithmetical structure on~$K_n$. Then there is a corresponding arithmetical structure on~$S_n$ given by 
\begin{align*}
\vd'&=(d_1',d_2' \dotsc, d_n',d_0') = (d_1+1, d_2+1, \dotsc, d_n+1,1)\\
\vr'&=(r_1',r_2' \dotsc, r_n',r_0') = \Bigl(r_1, r_2, \dotsc, r_n, \sum r_i\Bigr).
\end{align*}
An example of this operation is shown in Figure~\ref{fig:cliquestar}.

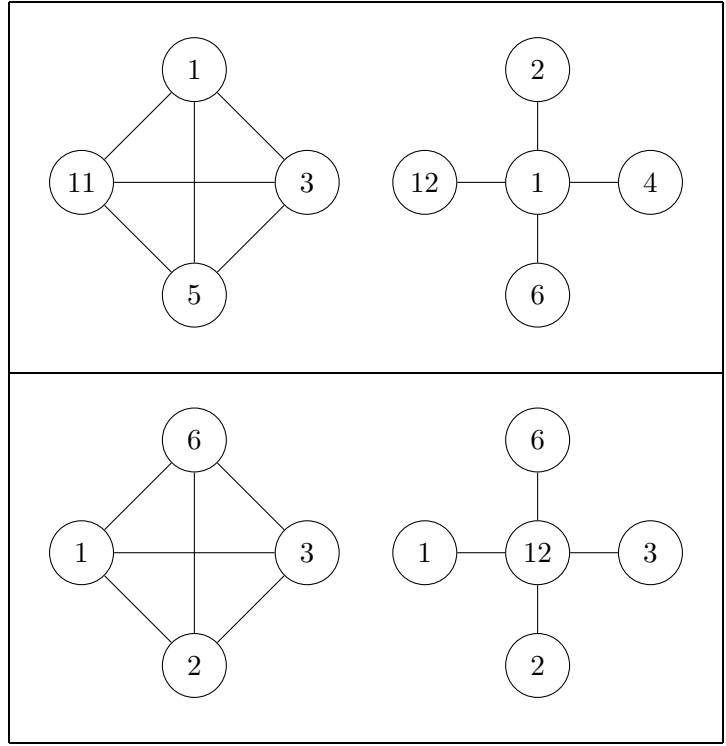
\begin{figure}
\centering
\begin{tabular}{|ccccc|}
\hline
&&&&\\
&\begin{tikzpicture} 
  \graph[nodes={draw, circle, minimum size=.85cm}, n=4, clockwise, radius=1.5cm] { subgraph K_n [V={1,3,5,11}];};
\end{tikzpicture} & &
\begin{tikzpicture}
    \graph[nodes={draw, circle, minimum size=.85cm},n=4] { subgraph I_n [V={2,4,6,12}, clockwise,radius=1.5cm] -- 1 };
\end{tikzpicture}&\\
&&&&\\\hline
&&&&\\
&\begin{tikzpicture} 
  \graph[nodes={draw, circle, minimum size=.85cm}, n=4, clockwise, radius=1.5cm] { subgraph K_n [V={6,3,2,1}];};
\end{tikzpicture} &&
\begin{tikzpicture}
    \graph[nodes={draw, circle, minimum size=.85cm},n=4] { subgraph I_n [V={6,3,2,1}, clockwise,radius=1.5cm] -- 12 };
\end{tikzpicture} &\\
&&&& \\ \hline
\end{tabular}
\caption{An example of the clique-star operation, where the $\vd$-values are given in the upper half of the figure and the $\vr$-values are given in the lower half. \label{fig:cliquestar}}
\end{figure}

It is straightforward to check that $(\vd',\vr')$ is an arithmetical structure on~$S_n$, and the clique-star operation gives a bijection between arithmetical structures on~$S_n$ with $d_0=1$ and arithmetical structures on~$K_n$ \cite[Theorem~5.1]{CV}. Moreover, the critical group of $(\vd,\vr)$ on~$K_n$ is isomorphic to the critical group of $(\vd',\vr')$ on~$S_n$ \cite[Theorem~5.3]{CV}. Therefore the groups that occur as critical groups of arithmetical structures on the complete graph~$K_n$ are precisely those that occur as critical groups of arithmetical structures on the star graph~$S_n$ with $d_0=1$.

\section{Computing critical groups}\label{sec:main}

The primary purpose of this section is to prove our main theorem about computing critical groups of arithmetical structures on star graphs in terms of their $\vd$-values.

\begin{theorem}\label{thm:criticalgroupstar}
If $(\vd,\vr)$ is an arithmetical structure on~$S_n$ for $n\geq1$ and $\mathcal{K}(S_n;\vd,\vr)$ is its critical group, then
\[\mathcal{K}(S_n;\vd,\vr)\oplus (\mathbb{Z}/r_0\mathbb{Z})^2 \cong \bigoplus_{i=1}^n \mathbb{Z}/d_i\mathbb{Z},\]
where $r_0=\lcm(d_1, d_2, \dotsc, d_n)$.
\end{theorem}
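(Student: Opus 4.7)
The plan is to analyze the cokernel $G$ of $L(S_n, \vd)$ through its natural presentation. The columns of $L$ present $G$ as the abelian group with generators $e_0, e_1, \ldots, e_n$ subject to the relations $d_i e_i = e_0$ for $i \in \{1, \ldots, n\}$ and $d_0 e_0 = \sum_{i=1}^n e_i$. Write $\mathcal{K} = \mathcal{K}(S_n; \vd, \vr)$ for the torsion subgroup of $G$, so that $G/\mathcal{K} \cong \mathbb{Z}$.

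The key step is to show that $G = \mathbb{Z} g \oplus \mathcal{K}$ in such a way that $e_0 = r_0 g$. I will define $\phi: G \to \mathbb{Z}$ by $\phi(e_i) = r_i$ for all $i \in \{0, 1, \ldots, n\}$; this is well-defined by the arithmetical structure equations and surjective because $\gcd(r_0, r_1, \ldots, r_n) = 1$, so $\ker \phi = \mathcal{K}$. Noting the identity $r_0 e_i = r_i e_0$ in $G$ (which follows from $d_i e_i = e_0$ and $d_i r_i = r_0$), and choosing a B\'ezout relation $\sum_{i=1}^n a_i r_i = 1$, the element $g = \sum_{i=1}^n a_i e_i$ satisfies $\phi(g) = 1$ and $r_0 g = e_0$. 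Taking this $g$ as a lift of a generator of $G/\mathcal{K} \cong \mathbb{Z}$ gives the decomposition $G = \mathbb{Z} g \oplus \mathcal{K}$ with $e_0$ contained in the free summand. I expect this to be the main obstacle, since it rules out the a priori possibility that $e_0$ acquires a torsion component in some splitting of $G$.

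Next I will compute $G/\langle e_0 \rangle$ in two ways. From the decomposition, $G/\langle e_0 \rangle = G/\langle r_0 g \rangle \cong \mathbb{Z}/r_0\mathbb{Z} \oplus \mathcal{K}$. Directly from the presentation, setting $e_0 = 0$ leaves generators $e_1, \ldots, e_n$ with relations $d_i e_i = 0$ and $\sum_{i=1}^n e_i = 0$, so $G/\langle e_0 \rangle \cong \bigl(\bigoplus_{i=1}^n \mathbb{Z}/d_i\mathbb{Z}\bigr) / \langle (1, 1, \ldots, 1) \rangle$. Together these two descriptions yield $\bigl(\bigoplus_{i=1}^n \mathbb{Z}/d_i\mathbb{Z}\bigr) / \langle (1, 1, \ldots, 1) \rangle \cong \mathcal{K} \oplus \mathbb{Z}/r_0\mathbb{Z}$.

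To finish, I will show that $\langle (1, 1, \ldots, 1) \rangle \cong \mathbb{Z}/r_0\mathbb{Z}$ is a direct summand of $\bigoplus_{i=1}^n \mathbb{Z}/d_i\mathbb{Z}$, working one prime at a time. For each prime $p$ dividing $r_0$, choose an index $i^*$ such that $d_{i^*}$ attains the maximal $p$-adic valuation among $d_1, \ldots, d_n$, which equals $v_p(r_0)$ by the lcm definition. The projection $\bigoplus \mathbb{Z}/d_i\mathbb{Z} \to \mathbb{Z}/d_{i^*}\mathbb{Z}$ followed by projection to the $p$-part restricts to an isomorphism on the $p$-part of $\langle (1, 1, \ldots, 1) \rangle$, exhibiting it as a direct summand of the $p$-part of $\bigoplus \mathbb{Z}/d_i\mathbb{Z}$. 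Assembling these splittings across all primes dividing $r_0$ splits off the desired $\mathbb{Z}/r_0\mathbb{Z}$ summand, and combining with the previous isomorphism gives $\bigoplus_{i=1}^n \mathbb{Z}/d_i\mathbb{Z} \cong \mathcal{K} \oplus (\mathbb{Z}/r_0\mathbb{Z})^2$.
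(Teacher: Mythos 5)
Your proof is correct, and it takes a genuinely different route from the paper's. The paper argues via Smith normal forms: it augments $L(S_n,\vd)$ by $\diag(r_0,r_0)$ and shows that the gcds $D_k$ of $k\times k$ minors of this matrix and of $\diag(d_1,\dotsc,d_n,1,1,0)$ coincide, both being equal to $G_k\bigl(\vdhat\bigr)$; the key arithmetic input there is Lemma~\ref{lem:primes} (the maximal power of each prime dividing $r_0$ is attained by at least two of the $d_i$), which is needed to control the largest minors. You instead work structurally with the cokernel presentation: you split off the free part using the functional $\phi$ given by $\vr$ and the B\'ezout element $g$ with $\phi(g)=1$ and $e_0=r_0g$, identify $G/\langle e_0\rangle$ with $\bigl(\bigoplus_{i=1}^n\ZZ/d_i\ZZ\bigr)/\langle(1,\dotsc,1)\rangle$, and then show the diagonal cyclic subgroup of order $r_0$ is a direct summand by projecting, one prime at a time, onto a coordinate realizing the maximal $p$-adic valuation (and the standard fact that a subgroup mapped isomorphically by some homomorphism is a summand). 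This is conceptually cleaner: it avoids all minor computations and does not even need Lemma~\ref{lem:primes}; the arithmetic enters only through $d_ir_i=r_0$ and the primitivity of $\vr$. Two small points deserve an explicit line: the B\'ezout relation $\sum_{i=1}^n a_ir_i=1$ needs $\gcd(r_1,\dotsc,r_n)=1$, which holds because each $r_i$ divides $r_0=d_ir_i$, so a common divisor of $r_1,\dotsc,r_n$ would divide every entry of~$\vr$; similarly, the identification of the central value $r_0$ with $\lcm(d_1,\dotsc,d_n)$, which you use when taking $v_p(d_{i^*})=v_p(r_0)$, also follows from primitivity. What the paper's computation buys in exchange is the explicit formula $D_k(B(\vd))=G_k\bigl(\vdhat\bigr)$, hence closed expressions for the individual invariant factors~$\alpha_k$, which it reuses later (for instance in Propositions~\ref{prop:components} and~\ref{prop:maked1}); with your argument those facts would have to be rederived from the isomorphism itself.
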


In order to prove this theorem, we will show that the matrix 
\[B(\vd)\coloneqq L(S_n,\vd) \oplus \begin{bmatrix} r_0 &0 \\ 0 & r_0\end{bmatrix}\]
and the diagonal matrix $C(\vd) \coloneqq\diag(d_1, d_2, \dotsc, d_n, 1, 1, 0)$ have the same Smith normal form. It is enough to show that the greatest common divisors of all $k\times k$ minors of each, which, following Section~\ref{subsec:smith}, we denote $D_k(B(\vd))$ and $D_k(C(\vd))$, respectively, are equal to each other. We in fact show that $D_k(B(\vd))$ and $D_k(C(\vd))$ are both equal to the greatest common divisor of all products of $k-2$ entries of $(d_1, d_2, \dotsc, d_n)$. To this end, we first state a few lemmas.

\begin{lemma}\label{lem:primes}
Let $n\geq2$. Suppose $d_1,d_2 \dotsc, d_n, d_0$ are positive integers with $\sum_{i=1}^n\frac{1}{d_i}=d_0$. If $p$ is a prime and $a$ is the largest integer for which $p^a$ divides $\lcm(d_1,d_2,\dotsc,d_n)$, then there exists $\{i,j\}\subseteq [n]$ such that $p^a$ divides both $d_i$ and~$d_j$.
\end{lemma}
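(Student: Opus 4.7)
The plan is to prove the contrapositive by a direct $p$-adic valuation computation after clearing denominators. The trivial case is $a=0$, where $p^0=1$ divides every $d_i$ and any pair $\{i,j\}$ works (since $n\geq 2$), so I assume $a\geq 1$ from now on.

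First I would observe that by the definition of $a$, the set $S=\{i\in[n] : v_p(d_i)=a\}$ is nonempty (otherwise every $v_p(d_i)\leq a-1$ would force $v_p(\lcm)\leq a-1$). The goal is then to rule out the case $\lvert S\rvert=1$. Suppose for contradiction $S=\{i_0\}$, so $v_p(d_{i_0})=a$ and $v_p(d_i)\leq a-1$ for all $i\neq i_0$.

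Next, set $L=\lcm(d_1,d_2,\dotsc,d_n)$, so $v_p(L)=a$, and clear denominators in the hypothesis $\sum_{i=1}^n 1/d_i=d_0$ to obtain
\[\sum_{i=1}^n \frac{L}{d_i}=Ld_0.\]
I would then read off the $p$-adic valuations of each side. For the $i_0$ term, $v_p(L/d_{i_0})=a-a=0$; for every other $i$, $v_p(L/d_i)=a-v_p(d_i)\geq 1$; and $v_p(Ld_0)\geq v_p(L)=a\geq 1$. Rearranging,
\[\frac{L}{d_{i_0}}=Ld_0-\sum_{i\neq i_0}\frac{L}{d_i},\]
the right-hand side is divisible by $p$ while the left-hand side is not, a contradiction. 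Hence $\lvert S\rvert\geq 2$, and any two indices in $S$ give the required pair.

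The only potential obstacle is the edge case handling (the trivial case $a=0$, and confirming $S$ is nonempty before analyzing its cardinality), but these are short observations; the rest is a mechanical $v_p$ bookkeeping argument after multiplying by $L$.
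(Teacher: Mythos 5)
Your proof is correct and follows essentially the same route as the paper: clear denominators by multiplying through by the lcm, note the term of maximal $p$-divisibility has $p$-free quotient, and conclude a second such index must exist from divisibility of the remaining terms. Framing it as a contradiction against uniqueness rather than directly exhibiting the second index is only a cosmetic difference.
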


\begin{proof}
If $a=0$ the result is trivially true, so suppose $a\geq 1$. Since the largest power of~$p$ that divides $\lcm(d_1,d_2,\dotsc,d_n)$ is the maximum of the largest powers of~$p$ that divide the $d_i$'s, there is some $i\in[n]$ for which $p^a$ divides~$d_i$. Let $c\coloneqq\lcm(d_1,d_2,\dotsc, d_n)$. Multiplying the equation $\sum_{i=1}^n\frac{1}{d_i}=d_0$ through by $c$ and separating out the $i$-th term, we obtain
\begin{equation}\label{eq:primes}
\frac{c}{d_i}+\sum_{\substack{j=1\\j\neq i}}^n \frac{c}{d_j}=c\cdot d_0.
\end{equation}
Since $p^a$ divides $c$, it also divides the right side of~\eqref{eq:primes}. Analyzing the left side, since the highest powers of~$p$ that divide $c$ and~$d_i$ are the same, we have that $p$ does not divide $c/d_i$. Therefore there must be some $j\in[n]\setminus\{i\}$ such that ${c}/{d_j}$ is not divisible by~$p$. This exactly implies that $p^a$ divides~$d_j$.
\end{proof}

\begin{lemma}\label{prop:stardet}
For any integers $d_1,d_2\dotsc, d_n,d_0$, we have that 
\[\scriptsize{\begin{vmatrix}
    d_1 & 0 & \cdots & 0 & -1 \\
    0 & d_2 & \ddots & \vdots & -1 \\
    \vdots & \ddots & \ddots & 0 & \vdots \\
    0 & \cdots & 0 & d_n & -1 \\
    -1 & -1 & \cdots & -1 & d_0 \\
\end{vmatrix}}=\prod_{i=0}^n d_i - \sum_{i=1}^{n}\prod_{\substack{j\in[n]\\j\neq i}} d_j.\]
\end{lemma}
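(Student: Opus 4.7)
The plan is to evaluate the determinant directly, exploiting the fact that this is an \emph{arrowhead matrix}: outside the diagonal, the only nonzero entries lie in the last row and the last column. I would use the Leibniz formula
\[\det(M) = \sum_{\sigma\in S_{n+1}} \operatorname{sgn}(\sigma)\prod_{i=1}^{n+1} M_{i,\sigma(i)},\]
and observe that a permutation $\sigma$ contributes a nonzero term only if, for each $i\in[n]$, the entry $M_{i,\sigma(i)}$ is nonzero, which forces $\sigma(i)\in\{i,\,n+1\}$. Since $\sigma$ is a bijection, this leaves exactly two possibilities: either $\sigma$ is the identity, or $\sigma$ is the transposition that swaps some $j\in[n]$ with $n+1$ and fixes every other index.

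The identity permutation contributes the diagonal product $d_0\prod_{i=1}^n d_i = \prod_{i=0}^n d_i$. For each transposition $(j\ n+1)$ with $j\in[n]$, the product of matrix entries picks up the two $-1$'s in row $j$ column $n+1$ and in row $n+1$ column $j$, together with the diagonal entries $d_i$ for $i\in[n]\setminus\{j\}$, yielding $(-1)(-1)\prod_{i\neq j}d_i$; with the transposition sign $-1$, this contributes $-\prod_{i\in[n],\,i\neq j}d_i$. Summing over $j\in[n]$ gives the stated formula.

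As an alternative sanity check, and as a backup in case one prefers not to reason combinatorially, one could factor $d_i$ out of row~$i$ for $i\in[n]$ (temporarily assuming each $d_i\neq0$), then add the first $n$ rows to the last to eliminate the $-1$'s there, producing an upper triangular matrix whose diagonal is $1,\dotsc,1,\,d_0-\sum_{i=1}^n 1/d_i$. Multiplying back by $\prod d_i$ gives $\prod_{i=0}^n d_i - \sum_{i=1}^n\prod_{j\neq i}d_j$; since both sides of the claimed identity are polynomials in $d_0,d_1,\dotsc,d_n$ that agree whenever all $d_i$ are nonzero, they agree identically.

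There is no real obstacle here: the matrix is so sparse that the Leibniz sum collapses to $n+1$ terms, and the only point requiring any care is correctly tracking the signs of the transposition contributions. I would therefore present the Leibniz argument as the main proof, since it avoids the polynomial-extension caveat needed by the row-reduction approach.
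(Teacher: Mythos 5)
Your Leibniz-formula argument is correct: for the arrowhead matrix in question, a permutation contributes a nonzero term only if each $i\in[n]$ is sent to $i$ or to $n+1$, which leaves exactly the identity and the $n$ transpositions $(j\ n+1)$, and your bookkeeping of the entries and signs (the two $-1$'s cancel, the transposition sign supplies the minus, and the diagonal entry $d_0$ is correctly excluded from those terms) yields precisely $\prod_{i=0}^n d_i-\sum_{j=1}^n\prod_{i\in[n],\,i\neq j}d_i$. This is a genuinely different route from the paper, which proves the identity by induction on $n$: it expands the determinant along row~$n$, applies the inductive hypothesis to the $(n-1)$-leaf arrowhead determinant, and evaluates the remaining cofactor as $-\prod_{i=1}^{n-1}d_i$. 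Your approach buys a one-step, non-inductive evaluation in which each term of the right-hand side is visibly matched to a permutation, while the paper's induction stays entirely within cofactor expansions, which is stylistically consistent with how the surrounding lemmas manipulate minors of $B(\vd)$; your backup row-reduction argument is also sound, and you are right that it needs (and you correctly supply) the polynomial-identity remark to remove the temporary assumption $d_i\neq0$.
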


\begin{proof}
We proceed by induction on~$n$. In the base case $n=0$, both sides equal~$d_0$. For the inductive step, we expand along row~$n$ to get that
\[\begin{split}
\scriptsize{
\begin{vmatrix}
    d_1 & 0 & \cdots & 0 & -1 \\
    0 & d_2 & \ddots & \vdots & -1 \\
    \vdots & \ddots & \ddots & 0 & \vdots \\
    0 & \cdots & 0 & d_n & -1 \\
    -1 & -1 & \cdots & -1 & d_0 \\
\end{vmatrix}}&=d_n\scriptsize{\begin{vmatrix}
    d_1 & 0 & \cdots & 0 & -1 \\
    0 & d_2 & \ddots & \vdots & -1 \\
    \vdots & \ddots & \ddots & 0 & \vdots \\
    0 & \cdots & 0 & d_{n-1} & -1 \\
    -1 & -1 & \cdots & -1 & d_0 \\
\end{vmatrix}}+\scriptsize{\begin{vmatrix}
    d_1 & 0 & 0 & \cdots & 0 \\
    0 & d_2 & \ddots & \ddots & \vdots \\
    \vdots & \ddots & \ddots & \ddots & 0 \\
    0 & \cdots & 0 & d_{n-1} & 0 \\
    -1 & -1 & -1 & \cdots & -1 \\
\end{vmatrix}}\\
&=d_n\vastl(\prod_{i=0}^{n-1} d_i - \sum_{i=1}^{n-1}\prod_{\substack{j\in[n-1]\\j\neq i}} d_j\vastr)-\scriptsize{\begin{vmatrix}
    d_1 & 0 & \cdots & 0 \\
    0 & d_2 & \ddots & \vdots \\
    \vdots & \ddots & \ddots & 0 \\
    0 & \cdots & 0 & d_{n-1} \\
\end{vmatrix}}\\
&=\prod_{i=0}^n d_i - \sum_{i=1}^{n}\prod_{\substack{j\in[n]\\j\neq i}} d_j.\qedhere
\end{split}\]
\end{proof}

Given a vector $\vd=(d_1,d_2 \dotsc, d_n,d_0)$ such that $\sum_{i=1}^n\frac{1}{d_i}=d_0$, we create the vector $\vdhat\coloneqq(d_1,d_2,\dotsc, d_n)$, obtained by removing the $d_0$~entry from~$\vd$. Let $G_k\bigl(\vdhat\bigr)$ be the greatest common divisor of all products of $k-2$ entries of~$\vdhat$, i.e.\ those products of the form $\prod_{j=1}^{k-2} d_{i_j}$ where $\{i_1, i_2, \dotsc, i_{k-2}\} \subseteq [n]$. The next two lemmas compute $D_k(B(\vd))$, the greatest common divisor of all $k\times k$ minors of the matrix 
\[ B(\vd)= \scriptsize{\begin{bmatrix}
    d_1 & 0 & \cdots & 0 & -1 & 0 & 0 \\
    0 & d_2 & \ddots & \vdots & -1 & \vdots & \vdots \\
    \vdots & \ddots & \ddots & 0 & \vdots & \vdots & \vdots \\
    0 & \cdots & 0 & d_n & -1 & \vdots & \vdots \\
    -1 & -1 & \cdots & -1 & d_0 & 0 & \vdots \\
    0 & \cdots & \cdots & \cdots & 0 & r_0 & 0 \\
    0 & \cdots & \cdots & \cdots & \cdots & 0 & r_0 \\
\end{bmatrix}},\]
where $r_0=\lcm(d_1,d_2,\dotsc, d_n)$. We find that $D_k(B(\vd))$ is exactly $G_k\bigl(\vdhat\bigr)$ by showing first that $D_k(B(\vd))$ divides $G_k\bigl(\vdhat\bigr)$ and then that $G_k\bigl(\vdhat\bigr)$ divides $D_k(B(\vd))$. 

\begin{lemma}\label{lem:minorsofB1}
Let $n\geq2$. Suppose $d_1,d_2,\dotsc,d_n,d_0$ are positive integers with $\sum_{i=1}^n\frac{1}{d_i}=d_0$. For all $k\in\{2,3,\dotsc,n+2\}$, we have that $D_k(B(\vd))$ divides $G_k\bigl(\vdhat\bigr)$.
\end{lemma}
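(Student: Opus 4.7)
My plan is to exhibit, for each $k \in \{2, 3, \dotsc, n+2\}$, a family of $k\times k$ minors of $B(\vd)$ whose greatest common divisor divides $G_k(\vdhat)$. Since $D_k(B(\vd))$ divides every $k\times k$ minor, it will then divide any gcd thereof, and in particular $G_k(\vdhat)$. I would split the argument into the range $2\leq k\leq n$, where a single minor can hit $\prod_{i\in I}d_i$ on the nose, and the two boundary values $k\in\{n+1,n+2\}$, where I must instead control the gcd over a family of minors.

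For $2\leq k\leq n$, I would fix $I\subseteq [n]$ of size $k-2$, pick two distinct indices $j,j'\in[n]\setminus I$ (possible because $n-(k-2)\geq 2$), and take the $k\times k$ submatrix of $B(\vd)$ with rows $I\cup\{n+1,j\}$ and columns $I\cup\{n+1,j'\}$. After reordering with $I$ first, it has the block form
\[
\begin{pmatrix} D_I & -\mathbf{1}_I & 0\\ -\mathbf{1}_I^T & d_0 & -1\\ 0 & -1 & 0 \end{pmatrix},
\]
where $D_I=\diag((d_i)_{i\in I})$ and $\mathbf{1}_I$ is a column of $1$'s. A cofactor expansion along the last row, followed by one along the last column of the resulting minor, produces determinant $\pm\prod_{i\in I}d_i$. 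Doing this for every such $I$ yields $D_k(B(\vd))\mid \prod_{i\in I}d_i$ for all $I$ of size $k-2$, so $D_k(B(\vd))$ divides the gcd of these products, which is $G_k(\vdhat)$.

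For $k=n+1$ and $k=n+2$, the set $[n]\setminus I$ has fewer than two elements, so the previous construction breaks down. Instead, for each $m\in[n]$ I would set $I_m=[n]\setminus\{m\}$ and take the minor $M^{(k)}_m$ with rows and columns $I_m\cup\{n+1,n+3\}$ when $k=n+1$, and $I_m\cup\{n+1,n+2,n+3\}$ when $k=n+2$. Each such submatrix is block diagonal, with one block equal to the "inner" star Laplacian on leaves $I_m$ with center weight $d_0$, plus one or two trailing $r_0$ blocks. Lemma~\ref{prop:stardet} gives the inner determinant as $d_0\prod_{i\in I_m}d_i-\sum_{i\in I_m}\prod_{j\in I_m\setminus\{i\}}d_j$; multiplying $d_0=\sum_{i=1}^n 1/d_i$ through by $\prod_{i\in I_m}d_i$ collapses this expression to $\prod_{i\neq m}d_i/d_m$, which is therefore automatically an integer. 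So $M^{(n+1)}_m=\pm r_0\prod_{i\neq m}d_i/d_m$ and $M^{(n+2)}_m=\pm r_0^2\prod_{i\neq m}d_i/d_m$.

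The main obstacle I expect is the final step: verifying that $\gcd_m M^{(k)}_m$ divides $G_k(\vdhat)$. I would do this prime by prime. Fix a prime $p$ and write $\alpha_i=v_p(d_i)$ and $\alpha=v_p(r_0)=\max_i\alpha_i$. Then $v_p(M^{(n+1)}_m)=\alpha+\sum_i\alpha_i-2\alpha_m$, which is minimized over $m$ by choosing an index with $\alpha_m=\alpha$, giving $\sum_i\alpha_i-\alpha$. This exactly matches $v_p(G_{n+1}(\vdhat))=\sum_i\alpha_i-\max_c\alpha_c$, so $\gcd_m M^{(n+1)}_m=G_{n+1}(\vdhat)$. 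The case $k=n+2$ is analogous and yields $v_p(\gcd_m M^{(n+2)}_m)=\sum_i\alpha_i=v_p(\prod_i d_i)=v_p(G_{n+2}(\vdhat))$. The existence of an index $m$ realizing $v_p(d_m)=v_p(r_0)$ is immediate from the definition of $r_0$ as an lcm, so no appeal to Lemma~\ref{lem:primes} is needed. Since $D_k(B(\vd))$ divides each $M^{(k)}_m$, it divides their gcd, completing the argument.
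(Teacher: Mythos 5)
Your proof is correct. For $k\in\{2,3,\dotsc,n\}$ your construction is exactly the paper's: the submatrix on rows $I\cup\{\ell,n+1\}$ and columns $I\cup\{m,n+1\}$ with $\ell\neq m$ outside $I$, whose minor is $\pm\prod_{i\in I}d_i$. Where you genuinely diverge is in the boundary cases $k\in\{n+1,n+2\}$. The paper uses non-principal minors of the form $\pm r_0\prod_{i\in I\setminus\{s\}}d_i$ and $\pm r_0^2\prod_{i\in[n]\setminus\{s,t\}}d_i$, and the crucial input is Lemma~\ref{lem:primes}, which uses the relation $\sum_i 1/d_i=d_0$ to guarantee that each maximal prime power $p^a\mid r_0$ is attained by \emph{two} of the $d_i$ (so that one can choose $s$ avoiding $m$, resp.\ choose distinct $s,t$), after which a valuation count gives $D_k(B(\vd))\mid\prod_{i\in I}d_i$. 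You instead take the principal minors through the whole inner star block together with one or two $r_0$ blocks, evaluate the inner determinant by Lemma~\ref{prop:stardet}, and use the defining equation to collapse it to $\prod_{i\neq m}d_i/d_m$, so your minors are $r_0\prod_{i\neq m}d_i/d_m$ and $r_0^2\prod_{i\neq m}d_i/d_m$; a prime-by-prime minimization over $m$ (choosing $m$ with $v_p(d_m)=v_p(r_0)$, which indeed needs only the lcm definition) then shows the gcd of this family equals $G_k\bigl(\vdhat\bigr)$, which is even slightly stronger than the required divisibility. Your valuation computations check out, e.g.\ $\min_m\bigl(2v_p(r_0)+\sum_i v_p(d_i)-2v_p(d_m)\bigr)=\sum_i v_p(d_i)$. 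Two remarks: you do not escape the arithmetical-structure hypothesis, you just relocate it — it enters through the identity $d_0\prod_{i\in I_m}d_i-\sum_{i\in I_m}\prod_{j\in I_m\setminus\{i\}}d_j=\prod_{i\neq m}d_i/d_m$ rather than through Lemma~\ref{lem:primes}; and what your route buys is a uniform treatment of $k=n+1$ and $k=n+2$ by a single family of principal minors, at the cost of needing the explicit determinant formula of Lemma~\ref{prop:stardet} (which the paper reserves for the companion Lemma~\ref{lem:minorsofB2}), while the paper's route isolates the number-theoretic content in a reusable lemma.
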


\begin{proof}
We will show, for any subset $I = \{i_1,i_2,\dotsc,i_{k-2}\}\subseteq[n]$, that $D_k(B(\vd))$ divides the product $\prod_{i\in I}d_{i}$. Using B\'{e}zout's identity, this would imply that $D_k(B(\vd))$ divides $G_k\bigl(\vdhat\bigr)$, the greatest common divisor of all such products.

First, let $k\in\{2,3,\dotsc, n\}$ and consider a subset $I \subseteq[n]$ of cardinality $k-2$. Let $\{\ell,m\}\subseteq[n]\setminus I$, and consider the $k\times k$ submatrix of $B(\vd)$ determined by rows indexed by $I \cup\{\ell,n+1\}$ and columns indexed by $I \cup\{m,n+1\}$. Computing the determinant of this submatrix by expanding along the row originally indexed by~$\ell$ and then the column originally indexed by~$m$, we get that the corresponding minor equals $\pm\prod_{i \in I}d_{i}$. Therefore, $D_k(B(\vd))$ divides $\prod_{i \in I}d_{i}$. 

For the cases $k\in \{n+1,n+2\}$, we will show that, for any prime~$p$ and any subset $I\subseteq[n]$ of cardinality $k-2$, the largest power of~$p$ that divides $D_k(B(\vd))$ must also divide $\prod_{i\in I} d_i$. 

In the case $k=n+1$, a subset of $[n]$ of cardinality $k-2$ must be of the form $I = [n]\setminus\{m\}$ for some $m\in[n]$. Let $p$ be prime, let $a$ be the largest integer for which $p^a$ divides $r_0$, and let $a_i$ be the largest integer for which $p^{a_i}$ divides~$d_i$. By Lemma~\ref{lem:primes}, there is some $s\in I$ for which $a_s=a$. Consider the $(n+1)\times(n+1)$ submatrix of $B(\vd)$ determined by the rows in $I\cup\{n+1, n+2\}$ and the columns in $[n]\cup\{n+1,n+2\}\setminus\{s\}$. Computing the determinant of this submatrix by expanding along the row originally indexed by~$s$ and then the column originally indexed by~$m$, we get that the corresponding minor is $\pm r_0 \prod_{i \in I\setminus\{s\}} d_i$. Since $D_{n+1}(B(\vd))$ divides this minor, we have that the largest power of~$p$ that divides $D_{n+1}(B(\vd))$ is at most 
\[a+\sum_{\substack{i \in I\\i\neq s}} a_i = \sum_{i\in I} a_i,\]
which is exactly the largest power of~$p$ that divides the product $\prod_{i\in I} d_i$. Since this argument holds for any prime, we conclude that $D_{n+1}(B(\vd))$ divides $\prod_{i\in I} d_i$.

The case $k=n+2$ is similar. In this case, the only subset of $[n]$ with cardinality $k-2$ is $[n]$ itself. For a prime~$p$, define $a$ and~$a_i$ as above. By Lemma~\ref{lem:primes}, there is $\{s,t\}\subseteq [n]$ such that $a_s=a_t=a$. Consider the $(n+2)\times(n+2)$ submatrix of $B(\vd)$ determined by the rows indexed by $[n+3]\setminus\{s\}$ and the columns indexed by $[n+3]\setminus\{t\}$. Computing the determinant of this submatrix by expanding along the row originally indexed by~$t$ and then the column originally indexed by~$s$, we find the determinant to be 
\[\pm r_0^2\prod_{\substack{i \in [n]\\i\neq s,t}}d_{i}.\]
Since $D_{n+2}(B(\vd))$ divides this minor, the largest power of~$p$ dividing $D_{n+2}(B(\vd))$ is at most 
\[2a+\sum_{\substack{i \in [n]\\i\neq s,t}}a_{i} = \sum_{i=1}^{n}a_{i},\]
which is the largest power of~$p$ dividing $\prod_{i=1}^{n}d_{i}$. Since this argument works for any prime, we conclude that $D_{n+2}(B(\vd))$ divides $\prod_{i=1}^{n}d_{i}$. 
\end{proof}

\begin{lemma}\label{lem:minorsofB2}
Suppose $d_1,d_2\dotsc,d_n,d_0$ are positive integers with $\sum_{i=1}^n\frac{1}{d_i}=d_0$. For all $k\in\{2,3,\dotsc,n+2\}$, we have that $G_k\bigl(\vdhat\bigr)$ divides $D_k(B(\vd))$.
\end{lemma}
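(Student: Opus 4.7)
The plan is to verify, one prime at a time, that $v_p(\mu) \ge v_p(G_k(\vdhat))$ for every $k \times k$ minor $\mu$ of $B(\vd)$; summing over primes then yields $G_k(\vdhat) \mid D_k(B(\vd))$. Fix a prime $p$, set $a_i = v_p(d_i)$ and $a = v_p(r_0) = \max_i a_i$, and write $a_{(1)} \le \dotsb \le a_{(n)}$ for the sorted values, so that $v_p(G_k(\vdhat)) = a_{(1)} + \dotsb + a_{(k-2)}$. Because $B(\vd) = L(S_n,\vd) \oplus \diag(r_0,r_0)$ and the two extra diagonal rows and columns are nonzero only on the diagonal, any nonzero $k \times k$ minor of $B(\vd)$ equals $\pm r_0^\ell M$, where $\ell \in \{0,1,2\}$ counts how many rows (equivalently, columns) come from the $\diag(r_0,r_0)$ block and $M$ is a $(k-\ell)\times(k-\ell)$ minor of $L(S_n,\vd)$. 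A case analysis on whether the center index $n+1$ is in the selected rows, the selected columns, both, or neither, combined with cofactor expansion along the sparse leaf rows and columns, shows that $M$ is either (a)~a signed product $\pm\prod_{i \in S'} d_i$ for some $S' \subseteq [n]$ with $|S'| \in \{k-\ell,\,k-\ell-1,\,k-\ell-2\}$, or (b)~the Laplacian-type determinant
\[T(S) \coloneqq d_0 \prod_{i \in S} d_i - \sum_{m \in S} \prod_{\substack{i \in S \\ i \neq m}} d_i, \qquad |S| = k - \ell - 1,\]
produced by Lemma~\ref{prop:stardet} applied to the submatrix indexed by $S \cup \{n+1\}$.

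For the product case (a), $v_p\bigl(r_0^\ell \prod_{i \in S'} d_i\bigr) = \ell a + \sum_{i \in S'} a_i \ge \ell a + a_{(1)} + \dotsb + a_{(|S'|)}$, since $\sum_{i \in S'} a_i$ is minimized when $S'$ consists of the indices with smallest $a_i$. Because $|S'| \ge k-\ell-2$ and each $a_{(j)} \le a$, the possibly missing terms $a_{(|S'|+1)} + \dotsb + a_{(k-2)}$ number at most $\ell$ and so total at most $\ell a$, giving $v_p\bigl(r_0^\ell \prod_{i \in S'} d_i\bigr) \ge v_p(G_k(\vdhat))$. This handles all product minors uniformly.

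I expect case (b) to be the main obstacle. The plan is to use the defining arithmetical identity $\sum_{i=1}^n 1/d_i = d_0$ to rewrite
\[T(S) = \prod_{i \in S} d_i \cdot \biggl(d_0 - \sum_{m \in S} \frac{1}{d_m}\biggr) = \prod_{i \in S} d_i \cdot \sum_{m \notin S} \frac{1}{d_m},\]
and then to apply the non-archimedean triangle inequality, together with $v_p(d_0) \ge 0$ and $v_p(1/d_m) = -a_m$, to the bracketed difference to conclude
\[v_p(T(S)) \ge \sum_{i \in S} a_i - \max_{m \in S} a_m,\]
which is the sum of the $|S|-1 = k-\ell-2$ smallest values among $\{a_i : i \in S\}$ (the edge case $|S|=0$ is handled directly since $T(\emptyset) = d_0$). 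Because $S \subseteq [n]$, that sum is at least $a_{(1)} + \dotsb + a_{(k-\ell-2)}$; adding the $\ell a$ contributed by $r_0^\ell$ and invoking the same pairing as in the product case then gives $v_p\bigl(r_0^\ell T(S)\bigr) \ge v_p(G_k(\vdhat))$. The subtle point here is that without the relation $d_0 = \sum_i 1/d_i$ there is no reason to expect the difference $d_0 - \sum_{m \in S} 1/d_m$ to have a denominator controlled by $\max_{m \in S} a_m$; it is precisely the arithmetical hypothesis that makes the $p$-adic estimate tight enough to conclude.
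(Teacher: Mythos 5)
Your proof is correct, and its skeleton---factoring any nonzero $k\times k$ minor of $B(\vd)$ as $\pm r_0^\ell$ times a minor of $L(S_n,\vd)$, then classifying the latter via cofactor expansion into pure products of $d_i$'s and the Laplacian-type determinants $T(S)$ of Lemma~\ref{prop:stardet}---is the same decomposition the paper uses, just organized there as principal versus non-principal minors. The difference is in the bookkeeping: the paper argues term by term, noting that every term in these expansions is divisible by some product of $k-2$ entries of $\vdhat$ (with the relation $d_i\mid r_0$ letting a factor of $r_0$ stand in for a missing $d_i$), so each minor is divisible by $G_k\bigl(\vdhat\bigr)$ outright and Bézout finishes; you instead fix a prime and run $p$-adic valuation estimates, using the ultrametric inequality on $T(S)$ as a whole and the bound $v_p(r_0)\ge v_p(d_i)$. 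The two are equivalent in strength, and your estimates (including the pairing of the $\le\ell$ missing sorted terms against $\ell\, v_p(r_0)$) all check out. One correction to your commentary: the claim that the hypothesis $\sum_i 1/d_i=d_0$ is what makes the estimate for $T(S)$ work is not right. The bound $v_p(T(S))\ge\sum_{i\in S}v_p(d_i)-\max_{m\in S}v_p(d_m)$ follows from applying the ultrametric inequality directly to the defining expression $d_0\prod_{i\in S}d_i-\sum_{m\in S}\prod_{i\in S,\,i\neq m}d_i$, since each summand is a product of at least $\abs{S}-1$ of the $d_i$'s times a factor of nonnegative valuation; this is exactly the paper's term-by-term observation, and indeed this direction of the lemma uses no arithmetical identity at all (that hypothesis, via Lemma~\ref{lem:primes}, is needed only for the reverse divisibility in Lemma~\ref{lem:minorsofB1}). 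Your rewriting via $\sum_{m\notin S}1/d_m$ is therefore a harmless detour rather than the crux.
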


\begin{proof}
It is enough to show that all $k\times k$ minors of $B(\vd)$ are divisible by $G_k\bigl(\vdhat\bigr)$, the greatest common divisor of all products of $k-2$ entries of~$\vdhat$, as the result then follows by using B\'{e}zout's identity.

First, consider principal minors of $B(\vd)$. If a principal submatrix does not include row and column $n+1$, the corresponding minor is the determinant of a diagonal matrix with $k$~diagonal entries. At least $k-2$ of these diagonal entries must be entries of~$\vdhat$, say $d_{i_1},d_{i_2}, \dotsc, d_{i_{k-2}}$, so we have that the minor is divisible by $\prod_{j=1}^{k-2} d_{i_j}$ and thus by $G_k\bigl(\vdhat\bigr)$. 

If the rows and columns of a principal submatrix are indexed by $I\cup\{n+1\}$ for some subset $I\subseteq [n]$ of cardinality $k-1$, then by Lemma~\ref{prop:stardet} the associated minor is
\[d_0 \prod_{i \in I} d_i - \sum_{i \in I} \prod_{\substack{j\in I\\j\neq i}} d_j.\]
Since each term of the above sum has a product of at least $k-2$ entries of~$\vdhat$, each term is divisible by $G_k\bigl(\vdhat\bigr)$ and hence the minor is divisible by $G_k\bigl(\vdhat\bigr)$.

If the rows and columns of a principal submatrix are indexed by $I\cup\{n+1, n+2\}$ or $I\cup\{n+1,n+3\}$ for a subset $I\subseteq[n]$ of cardinality $k-2$, then using Lemma~\ref{prop:stardet} the associated minor is
\[d_0r_0 \prod_{i \in I} d_i - \sum_{i \in I}r_0 \prod_{\substack{j\in I\\j\neq i}} d_j.\]
Since each term of the above sum has $r_0$ times a product of at least $k-3$ entries of~$\vdhat$ and $r_0$ is divisible by~$d_i$ for every $i\in [n]$, each term is divisible by $G_k\bigl(\vdhat\bigr)$ and hence the minor is divisible by $G_k\bigl(\vdhat\bigr)$.

If the rows and columns of a principal submatrix are indexed by $I\cup\{n+1, n+2,n+3\}$ for a subset $I\subseteq[n]$ of cardinality $k-3$, then using Lemma~\ref{prop:stardet} the associated minor is
\[d_0r_0^2 \prod_{i \in I} d_i - \sum_{i \in I}r_0^2 \prod_{\substack{j\in I\\j\neq i}} d_j.\]
Since each term of the above sum has $r_0^2$ times a product of at least $k-4$ entries of~$\vdhat$ and $r_0$ is divisible by~$d_i$ for every $i\in [n]$, each term is divisible by $G_k\bigl(\vdhat\bigr)$ and hence the minor is divisible by $G_k\bigl(\vdhat\bigr)$.

Finally, consider non-principal minors. For a non-principal minor of $B(\vd)$ to be nonzero, the rows of the corresponding submatrix must be indexed by $I\cup\{s\}$ and the columns by $I\cup\{t\}$, where $I$ is a subset of $[n+3]$ of cardinality $k-1$ and $s$ and~$t$ are distinct elements of $[n+1]$ that are not in~$I$, with $n+1\in I\cup\{s,t\}$.

If $s=n+1$, computing the determinant of the submatrix by expanding along the column originally labeled by~$t$ gives that it is (up to sign) a product of $k-1$ entries of $(d_1, d_2, \dotsc, d_n, r_0, r_0)$. If at least $k-2$ of these are entries of~$\vdhat$, then the minor is divisible by a product of $k-2$ such entries and is thus divisible by $G_k\bigl(\vdhat\bigr)$. If the minor is of the form $\pm r_0^2\prod_{i\in J} d_i$ for a subset $J \subseteq[n]$ of cardinality $k-3$, then since $d_i$ divides $r_0$ for all $i\in[n]$ we have that the minor is divisible by some product of $k-2$ entries of~$\vdhat$ and hence by $G_k\bigl(\vdhat\bigr)$. When $t=n+1$, expanding along the row originally indexed by~$s$ gives the same result.

When $n+1\in I$, computing the determinant of the submatrix by expanding along the row originally indexed by~$s$ and the column originally indexed by~$t$ gives that the minor is (up to sign) a product of $k-2$ entries of $(d_1, d_2, \dotsc, d_n, r_0, r_0)$. As before, since $d_i$ divides $r_0$ for all $i\in[n]$, the minor is divisible by some product of $k-2$ entries of~$\vdhat$ and thus by $G_k\bigl(\vdhat\bigr)$. 
\end{proof}

Together, Lemmas \ref{lem:minorsofB1} and~\ref{lem:minorsofB2} show that $D_k(B(\vd)) = G_k\bigl(\vdhat\bigr)$ for $k\in\{2,3,\dotsc, n+2\}$. It remains to show the same is true for $D_k(C(\vd))$. 

\begin{lemma}\label{lem:minorsofC}
Let $d_1,d_2,\dotsc,d_n$ be integers. Consider the $(n+3)\times(n+3)$ matrix 
\[ C(\vd)=\diag(d_1,d_2\dotsc, d_n,1,1,0).\] 
If $k\in\{2,3,\dotsc,n+2\}$, then $D_k(C(\vd))=G_k\bigl(\vdhat\bigr)$.
\end{lemma}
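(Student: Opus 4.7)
The plan is to exploit the fact that $C(\vd)$ is diagonal, which drastically restricts which submatrices have nonzero determinant. First I would observe that every non-principal $k\times k$ minor of a diagonal matrix vanishes: if the row index set $R$ and column index set of a submatrix are not equal, then picking $i\in R$ outside the column set gives a row of the submatrix whose entries are all off-diagonal entries of $C(\vd)$, hence identically zero. Consequently $D_k(C(\vd))$ is the greatest common divisor of the principal $k\times k$ minors of $C(\vd)$.

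Next I would enumerate those principal minors explicitly. A principal $k\times k$ submatrix corresponds to a size-$k$ subset $S\subseteq[n+3]$, and its determinant is the product of the diagonal entries of $C(\vd)$ indexed by $S$. Because the last diagonal entry is $0$, any $S$ containing $n+3$ contributes a zero minor, so I may restrict to $S\subseteq[n+2]$. Writing $I=S\cap[n]$, the corresponding minor equals $\prod_{i\in I}d_i$, since the $(n+1)$-th and $(n+2)$-th diagonal entries are both $1$. The constraint $|S|=k$ forces $|I|\in\{k-2,k-1,k\}$, and each of these values is attained (which is possible precisely because $2\leq k\leq n+2$).

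The final step is to identify the gcd of the nonzero minors with $G_k\bigl(\vdhat\bigr)$. Any product $\prod_{i\in I}d_i$ with $|I|>k-2$ is divisible by any of its size-$(k-2)$ subproducts, so including such terms does not decrease the gcd; and the products with $|I|=k-2$ are exactly those defining $G_k\bigl(\vdhat\bigr)$. Therefore the gcd equals $G_k\bigl(\vdhat\bigr)$, as required. I do not anticipate any real obstacle; the only point that needs a moment's care is the vanishing of non-principal minors of a diagonal matrix, which is essentially a one-line remark.
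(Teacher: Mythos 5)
Your argument is correct and is essentially the paper's proof: non-principal minors of the diagonal matrix vanish, the nonzero principal minors are products of $k$ entries of $(d_1,\dotsc,d_n,1,1)$, and their gcd is identified with $G_k\bigl(\vdhat\bigr)$ because every such product is divisible by a size-$(k-2)$ subproduct of entries of $\vdhat$ while all size-$(k-2)$ products occur as minors; you merely spell out the step the paper calls straightforward. (One immaterial slip: for $k>n$ the cases $\abs{I}=k-1$ or $k$ need not be attained, but only the attainment of $\abs{I}=k-2$, guaranteed by $k\leq n+2$, is used.)
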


\begin{proof}
Any non-principal minor will be zero, so we only need to consider principal minors. Each nonzero principal minor is a product of~$k$ entries of $(d_1,d_2, \dotsc, d_n,1,1)$. It is straightforward to see that the greatest common divisor of these products is exactly the greatest common divisor of the products of $k-2$ entries of~$\vdhat$. 
\end{proof}

We can now prove Theorem~\ref{thm:criticalgroupstar} using the preceding lemmas.

\begin{proof}[Proof of Theorem~\ref{thm:criticalgroupstar}]
If $n=1$ the result is trivially true, so assume $n\geq2$. Consider the $(n+3)\times (n+3)$ matrices
\[ B(\vd)=\scriptsize{\begin{bmatrix}
    d_1 & 0 & \cdots & 0 & -1 & 0 & 0 \\
    0 & d_2 & \ddots & \vdots & -1 & \vdots & \vdots \\
    \vdots & \ddots & \ddots & 0 & \vdots & \vdots & \vdots \\
    0 & \cdots & 0 & d_n &-1 & \vdots & \vdots \\
    -1 & -1 & \cdots & -1 & d_0 & 0 & \vdots \\
    0 & \cdots & \cdots & \cdots & 0 & r_0 & 0 \\
    0 & \cdots & \cdots & \cdots & \cdots & 0 & r_0 \\
\end{bmatrix}},\]
whose cokernel is isomorphic to $\mathbb{Z}\oplus \mathcal{K}(S_n;\vd,\vr)\oplus (\mathbb{Z}/r_0\mathbb{Z})^2$, and 
\[ C(\vd)=\diag(d_1,d_2,\dotsc, d_n,1,1,0),\]
whose cokernel is isomorphic to $\mathbb{Z}\oplus (\bigoplus_{i=1}^n\mathbb{Z}/d_i \mathbb{Z})$. To prove the result, it suffices to show that $B(\vd)$ and $C(\vd)$ have the same Smith normal form. This in turn follows from showing that $D_k(B(\vd))=D_k(C(\vd))$ for all $k\in \{0,1,\dotsc, n+3\}$. By definition, $D_0(B(\vd))=D_0(C(\vd))=1$ and $D_1(B(\vd))=D_1(C(\vd)) = 1$ as these matrices both have an entry of $\pm1$. Since the upper left $(n+1)\times (n+1)$ submatrix of $B(\vd)$ is the matrix of an arithmetical structure, it has determinant~$0$, and thus $D_{n+3}(B(\vd))=D_{n+3}(C(\vd))=0$. We therefore only need to consider the cases with $k\in\{2,3,\dotsc,n+2\}$. The result then follows from Lemmas \ref{lem:minorsofB1}, \ref{lem:minorsofB2}, and~\ref{lem:minorsofC}.
\end{proof}

As a result of Theorem~\ref{thm:criticalgroupstar}, it is a straightforward exercise to obtain the values of~$\alpha_{k}$ from~$\vdhat$. For example, if $\vdhat=(2,3,4,4,6,9,9,10,15,18,18)$, then $r_0=180$ and Theorem~\ref{thm:criticalgroupstar} implies that the critical group $\CK\coloneqq\mathcal{K}(S_n;\vd,\vr)$ satisfies
\[\CK\oplus(\ZZ/180\ZZ)^2 \cong \ZZ/2\ZZ\oplus\ZZ/3\ZZ \oplus (\ZZ/4\ZZ)^2\oplus \ZZ/6\ZZ \oplus (\ZZ/9\ZZ)^2 \oplus \ZZ/10\ZZ \oplus \ZZ/15\ZZ \oplus (\ZZ/18\ZZ)^2.\]
Notice that $180=2^2\cdot 3^2\cdot 5$. Rewriting the expression using primary decomposition we get
\[\CK\oplus(\ZZ/4\ZZ)^2\oplus(\ZZ/9\ZZ)^2\oplus(\ZZ/5\ZZ)^2 \cong (\ZZ/2\ZZ)^5 \oplus(\ZZ/4\ZZ)^2\oplus(\ZZ/3\ZZ)^3\oplus (\ZZ/9\ZZ)^4\oplus (\ZZ/5\ZZ)^2.\]
It is then clear that
\[\CK\cong(\ZZ/2\ZZ)^5\oplus(\ZZ/3\ZZ)^3\oplus(\ZZ/9\ZZ)^2 \cong(\ZZ/6\ZZ)^3\oplus(\ZZ/18\ZZ)^2.\]

In general, for each prime~$p$ and $i\in[n]$, let $a_{p,i}$ be the largest integer for which $p^{a_{p,i}}$ divides $d_i$, so that $d_i=\prod p^{a_{p,i}}$. For each~$p$, let $\{e_{p,1}\leq e_{p,2}\leq \dotsb \leq e_{p,n}\}$ be equal (as sets) to $\{a_{p,1},a_{p,2},\dotsc,a_{p,n}\}$, reindexed to appear in nondecreasing order. It then follows from Theorem~\ref{thm:criticalgroupstar} that $D_k(B(\vd))=\prod p^{\sum_{i=1}^{k-2}e_{p,i}}$. (When $k\in\{0,1,2\}$, the sum is empty and we have that $D_0(B(\vd))=D_1(B(\vd))=D_2(B(\vd))=1$.) Therefore, we have $\alpha_1=\alpha_2=1$ and, for all $k\in\{3,4,\dotsc,n\}$,
\[\alpha_k=\frac{D_k(B(\vd))}{D_{k-1}(B(\vd))}=\prod p^{e_{p,k-2}}.\]

\begin{proposition}\label{prop:CriticalGroupStarWithTwo1}
Let $(\vd,\vr)$ be an arithmetical structure on~$S_n$. If $r_i=r_j=1$ for some $i,j \in[n]$ with $i\neq j$, then 
\[\mathcal{K}(S_n;\vd,\vr)\cong \bigoplus_{\substack{k\in[n]\\ k\neq i,j}}\mathbb{Z}/d_k\mathbb{Z}.\]
\end{proposition}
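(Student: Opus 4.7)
The plan is to apply Theorem~\ref{thm:criticalgroupstar} directly and then simplify using the hypothesis on the $\vr$-values. Recall from the introduction that for any arithmetical structure on~$S_n$, the entries satisfy $r_k = r_0/d_k$ for all $k\in[n]$, where $r_0 = \lcm(d_1, d_2, \dotsc, d_n)$. Consequently, $r_k = 1$ is equivalent to $d_k = r_0$. So, under the hypothesis that $r_i = r_j = 1$, I immediately obtain $d_i = d_j = r_0$, and hence $\mathbb{Z}/d_i\mathbb{Z} \oplus \mathbb{Z}/d_j\mathbb{Z} \cong (\mathbb{Z}/r_0\mathbb{Z})^2$.

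Now I would substitute this into the conclusion of Theorem~\ref{thm:criticalgroupstar}, rewriting the right-hand side so that the factors coming from indices $i$ and $j$ are pulled out:
\[\mathcal{K}(S_n;\vd,\vr)\oplus (\mathbb{Z}/r_0\mathbb{Z})^2 \cong \bigoplus_{k=1}^n \mathbb{Z}/d_k\mathbb{Z} \cong (\mathbb{Z}/r_0\mathbb{Z})^2 \oplus \bigoplus_{\substack{k\in[n]\\k\neq i,j}} \mathbb{Z}/d_k\mathbb{Z}.\]
The final step is to cancel the common $(\mathbb{Z}/r_0\mathbb{Z})^2$ summand from both sides, which is valid because finitely generated abelian groups satisfy the cancellation property (this follows, e.g., from the uniqueness of the invariant factor decomposition). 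This yields the claimed isomorphism.

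There is essentially no main obstacle: once Theorem~\ref{thm:criticalgroupstar} is in hand, the proposition is a short corollary, and the only small point to verify is the translation between the condition $r_i=r_j=1$ and the condition $d_i = d_j = r_0$, together with legitimate use of cancellation.
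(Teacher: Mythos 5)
Your proposal is correct and follows the paper's own argument essentially verbatim: apply Theorem~\ref{thm:criticalgroupstar}, use $d_ir_i=r_0$ to translate $r_i=r_j=1$ into $d_i=d_j=r_0$, and cancel the common $(\mathbb{Z}/r_0\mathbb{Z})^2$ summand. The explicit remark that cancellation is legitimate for finite(ly generated) abelian groups is a nice touch the paper leaves implicit, but it is not a different approach.
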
 

\begin{proof}
Theorem~\ref{thm:criticalgroupstar} gives that 
\[\mathcal{K}(S_n;\vd,\vr)\oplus (\mathbb{Z}/r_0\mathbb{Z})^2 \cong \bigoplus_{k=1}^n \mathbb{Z}/d_{k}\mathbb{Z}.\]
Since $r_i=r_j=1$, we have that $d_i=d_j=r_0$. Therefore we have
\[\mathcal{K}(S_n;\vd,\vr)\oplus (\mathbb{Z}/r_0\mathbb{Z})^2\cong \Vastl(\bigoplus_{\substack{k\in[n]\\ k\neq i,j}}\mathbb{Z}/d_k\mathbb{Z}\Vastr)\oplus (\mathbb{Z}/r_0\mathbb{Z})^2,\]
from which the result follows.
\end{proof}

An immediate corollary of Section~\ref{sec:clique-star} and Theorem~\ref{thm:criticalgroupstar} allows us to determine the critical groups of arithmetical structures on complete graphs.

\begin{corollary}\label{cor:criticalgroupcomplete}
If $(\vd,\vr)$ is an arithmetical structure on the complete graph~$K_n$, then
\[\mathcal{K}(K_n;\vd,\vr)\oplus (\mathbb{Z}/\ell\mathbb{Z})^2 \cong \bigoplus_{i=1}^n \mathbb{Z}/(d_i+1)\mathbb{Z},\]
where $\ell=\lcm(d_1+1,d_2+1,\dotsc, d_n+1)$.
\end{corollary}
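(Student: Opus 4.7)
The plan is to deduce the corollary as an essentially immediate consequence of Theorem~\ref{thm:criticalgroupstar} together with the clique-star operation recalled in Section~\ref{sec:clique-star}. The point is that the clique-star operation turns an arithmetical structure on $K_n$ into one on $S_n$ in a completely transparent way on the $\vd$-labeling, and it preserves the critical group, so the formula for star graphs translates directly.

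Concretely, I would start with an arithmetical structure $(\vd,\vr)$ on $K_n$ with $\vd=(d_1,\dotsc,d_n)$, and apply the clique-star operation to obtain an arithmetical structure $(\vd',\vr')$ on $S_n$. By the explicit formula from Section~\ref{sec:clique-star}, we have $d_i' = d_i+1$ for $i\in[n]$ and $d_0'=1$. By \cite[Theorem~5.3]{CV}, the critical groups satisfy $\mathcal{K}(K_n;\vd,\vr)\cong \mathcal{K}(S_n;\vd',\vr')$.

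Next I would apply Theorem~\ref{thm:criticalgroupstar} to $(\vd',\vr')$. This gives
\[\mathcal{K}(S_n;\vd',\vr')\oplus (\mathbb{Z}/r_0'\mathbb{Z})^2 \cong \bigoplus_{i=1}^n \mathbb{Z}/d_i'\mathbb{Z},\]
where $r_0' = \lcm(d_1',d_2',\dotsc,d_n') = \lcm(d_1+1,d_2+1,\dotsc,d_n+1) = \ell$. Substituting $d_i' = d_i+1$ on the right and using the identification $\mathcal{K}(K_n;\vd,\vr)\cong \mathcal{K}(S_n;\vd',\vr')$ on the left yields the desired formula.

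There is no real obstacle here; the only points worth being careful about are (i)~noting that the clique-star bijection of Corrales and Valencia applies because any arithmetical structure on $S_n$ coming from this operation has $d_0'=1$, and (ii)~checking that $\lcm(d_1',\dotsc,d_n')$ really is $\ell$ as defined in the statement. Both are immediate from the definitions.
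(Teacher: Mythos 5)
Your proposal is correct and is essentially the paper's own proof: apply the clique-star operation to pass from $K_n$ to $S_n$ with $\vd'=\vd+\mathbf{1}$ and $d_0'=1$, invoke \cite[Theorem~5.3]{CV} to identify the critical groups, and then apply Theorem~\ref{thm:criticalgroupstar}, noting $r_0'=\ell$. No differences worth flagging.
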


\begin{proof}
We perform the clique-star operation on~$K_n$ with clique subgraph~$K_n$ to get an arithmetical structure $(\vd',\vr')$ on~$S_n$ with $\vd'=\vd+\textbf{1}$. By \cite[Theorem~5.3]{CV}, we have
\[\mathcal{K}(K_n;\vd,\vr)\cong \mathcal{K}(S_n;\vd',\vr').\] 
The result follows by applying Theorem~\ref{thm:criticalgroupstar}.
\end{proof}

\begin{remark}
Corollary~\ref{cor:criticalgroupcomplete} allows us to recover the well-known result (originally stated in \cite[Example~1.10]{L89}) that the critical group of the Laplacian arithmetical structure on~$K_n$ is $(\mathbb{Z}/n\mathbb{Z})^{n-2}$ as $\vd$ is the vector all of whose entries are $n-1$ and $r_0=d_i'=n$ for all $i\in[n]$.
\end{remark}

\begin{remark}
More generally, given any arithmetical structure $(\vd,\vr)$ on~$S_n$, we can apply the generalized star-clique operation given in~\cite{KR} to get an arithmetical structure $(\vd',\vr')$ on $d_0K_n$, the graph with $d_0$~edges between any two vertices. We have that $d'_i=d_0d_i$ for all $i\in[n]$, so Theorem~\ref{thm:criticalgroupstar} therefore gives that 
\[\mathcal{K}(d_0 K_n;\vd',\vr')\oplus(\ZZ/d_0r_0\ZZ)^2\cong\bigoplus_{i=1}^{n}\mathbb{Z}/d_0d_i\mathbb{Z}.\]
\end{remark}

\section{Groups that appear as critical groups}\label{sec:groupsthatappear}

This section is motivated by the following question.

\begin{question} \label{ques:allgroups}
Can every finite abelian group \textup{(}up to isomorphism\textup{)} be realized as the critical group of some arithmetical structure on a star graph? On a complete graph?
\end{question}

We obtain a partial answer to this question by considering certain families of groups in the following subsections.

\subsection{The trivial group}\label{subsec:trivial}

Lorenzini \cite[Corollary~2.10]{L24} has shown that every finite, connected graph admits an arithmetical structure with trivial critical group. In this subsection, we show how to explicitly construct such arithmetical structures on star graphs and complete graphs. 

It is not difficult to obtain the trivial group as a critical group on~$S_n$ for any~$n$ since the arithmetical structure with $r_i=1$ for all $i\in\{0,1,\dotsc,n\}$ has trivial critical group. One cannot obtain a corresponding arithmetical structure on the complete graph~$K_n$ with $n\geq2$ vertices via the star-clique operation in this case since $d_0=n\geq 2$. However, we can construct an arithmetical structure with trivial critical group on~$S_n$ that does have $d_0=1$, and applying the star-clique operation then gives an arithmetical structure with trivial critical group on~$K_n$.

Notice that if $\vdhat = (d_1,\dotsc,d_{n-1},d_n)$ determines an arithmetical structure on~$S_n$, that is, if each~$d_i$ is a positive integer and $d_0\coloneqq\sum_{i=1}^n\frac{1}{d_i}\in\mathbb{Z}$, then, using $\frac{1}{d_n}=\frac{1}{d_n+1}+\frac{1}{d_n(d_n+1)}$, we get that $\widehat{\vd'}=(d_1,\dotsc,d_{n-1},d_n+1,d_n(d_n+1))$ determines an arithmetical structure on $S_{n+1}$ with the same value of~$d_0$. Moreover, if $r_0=d_n$, we have that $r_0'=d_n(d_n+1)$. In this case, if $\CK$ (resp.~$\CK'$) is the critical group of the arithmetical structure determined by~$\vdhat$ (resp.~$\widehat{\vd'}$), Theorem~\ref{thm:criticalgroupstar} implies that
\[\CK \oplus (\ZZ/d_n\ZZ)^2 \cong \bigoplus_{i=1}^n \ZZ/d_i\ZZ\]
and
\[\CK' \oplus (\ZZ/d_n(d_n+1)\ZZ)^2 \cong \Biggl(\bigoplus_{i=1}^{n-1} \ZZ/d_i\ZZ\Biggr)\oplus \ZZ/(d_n+1)\ZZ \oplus \ZZ/d_n(d_n+1)\ZZ.\]
Since $\gcd(d_n, d_n+1)=1$, we can then conclude that $\CK' \cong \CK$. Note that, while the above construction itself does not rely on the fact that $d_n$ is the least common multiple of the other~$d_i$, the conclusion $\CK' \cong \CK$ does use this. 

\begin{example}\label{ex:sylvester}
By starting with the arithmetical structure on~$S_2$ given by $\vdhat=(2,2)$ and repeatedly applying this construction, we can construct arithmetical structures with trivial critical groups and with $d_0=1$ on~$S_n$ for any $n \geq 2$. The first few are listed below. 
\begin{center}
\begin{tabular}{lll}
$n$ & $\vd$ & $\vr$ \\
2 & $(2, 2, 1)$ & $(1,1, 2)$ \\
3 & $(2, 3, 6, 1)$ & $(3, 2, 1, 6)$ \\
4 & $(2,3,7,42, 1)$ & $(21,14,6,1, 42)$ \\
5 & $(2,3,7,43,1806, 1)$ & $(903,602,258,42,1, 1806)$ \\
6 & $(2,3,7,43,1807,3263442, 1)$ & $(1631721,1087814,466208,75894,1806,1, 3263442)$ \\
\end{tabular}
\end{center}
It follows from Curtiss~\cite{Curtiss} that, for each~$n$, these examples maximize the largest entry of a vector~$\vd$ for an arithmetical structure on~$S_n$. Because each of these structures has $d_0=1$, they will translate (via Corollary~\ref{cor:criticalgroupcomplete}) to arithmetical structures with trivial critical groups on the complete graph~$K_n$.
\end{example}

We note that the $\vdhat$ in the above table can also be thought of as being obtained by letting the first $n-1$ terms come from the beginning of Sylvester's sequence \cite[A000058]{OEIS} and then setting $d_n=\prod_{i=1}^{n-1}d_i$. This sequence, which grows doubly exponentially, is given by $s_{n+1} = s_n^2 - s_n + 1$, or alternatively by $s_n=\prod_{i=1}^{n-1} s_i +1$, with $s_1=2$. The first few terms are
\[2, 3, 7, 43, 1807, 3263443, 10650056950807, 113423713055421844361000443.\]
We will reference this sequence again later in this section.

More generally, we will obtain an arithmetical structure on~$S_n$ with trivial critical group for any set of relatively prime integers $\{d_i\}_{i=1}^{n-1}$ for which $\sum_{i=1}^{n-1} \frac{1}{d_i}+\frac{1}{\prod_{i=1}^{n-1} d_i} =1$. Sets of these numbers are well studied; see, for example, \cite{Anne, ACF, BD, BH}. However, this is not the only way to obtain arithmetical structures with trivial critical groups; $\vdhat=(2,3,10,15)$ gives another example.

\subsection{Cyclic groups}\label{subsec:cyclic}

We can generalize the construction given in Section~\ref{subsec:trivial} to obtain certain cyclic groups as critical groups. We first introduce the following operation. Suppose $\vdhat=(d_1,\dotsc, d_{n-1},d_n)$ determines an arithmetical structure on~$S_n$. For any integer $a\mid d_n$, the vector $\CD_a\bigl(\vdhat\bigr)=(d_1,\dotsc,d_{n-1},d_n+a,d_n(d_n+a)/a)$ determines an arithmetical structure on $S_{n+1}$ with the same $d_0$ value because $\frac{1}{d_n}=\frac{1}{d_n+a}+\frac{1}{{d_n}(d_n+a)/a}$.

We now state the following theorem.

\begin{theorem}\label{thm:expand}
Suppose $\vdhat=(d_1,\dotsc, d_{n-1},d_n)$ determines an arithmetical structure on~$S_n$ with critical group~$\CK$. For any integer $a\mid d_n$ with $\gcd(r_0/d_n, d_n/a+1)=1$, the critical group associated to $\CD_a\bigl(\vdhat\bigr)$ is given by $\CK'\cong \CK \oplus \ZZ/a\ZZ$.
\end{theorem}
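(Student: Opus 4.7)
The plan is to apply Theorem~\ref{thm:criticalgroupstar} to both arithmetical structures, compute how $r_0$ changes, and then reduce the desired isomorphism to a direct-sum identity of cyclic groups that can be checked prime-by-prime.

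First I would compute the new value of~$r_0$, which I will call~$r_0'$. Since $(d_n+a)\mid d_n(d_n+a)/a$ and $d_n\mid d_n(d_n+a)/a$,
\[r_0'=\lcm\bigl(d_1,\dotsc,d_{n-1},d_n+a,d_n(d_n+a)/a\bigr)=\lcm\bigl(r_0,\,d_n(d_n/a+1)\bigr).\]
Writing $r_0=d_n\cdot(r_0/d_n)$ and invoking the hypothesis $\gcd(r_0/d_n,\,d_n/a+1)=1$, this simplifies to $r_0'=r_0(d_n/a+1)$. Setting $b\coloneqq d_n/a+1$, we then have $d_n+a=ab$, $d_n(d_n+a)/a=d_nb$, $r_0'=r_0b$, and $d_n=a(b-1)$.

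Next, applying Theorem~\ref{thm:criticalgroupstar} to both structures produces two isomorphisms sharing the common summand $\bigoplus_{i=1}^{n-1}\ZZ/d_i\ZZ$. By uniqueness of the invariant factor decomposition for finite abelian groups, we may cancel this common summand, so that the claim $\CK'\cong\CK\oplus\ZZ/a\ZZ$ is equivalent to the identity
\[\ZZ/a\ZZ\oplus\ZZ/d_n\ZZ\oplus(\ZZ/r_0b\ZZ)^2 \;\cong\; \ZZ/ab\ZZ\oplus\ZZ/d_nb\ZZ\oplus(\ZZ/r_0\ZZ)^2.\]
I would verify this identity by matching multisets of $p$-adic valuations for each prime~$p$. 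If $p\nmid b$, then $b$ contributes no $p$-power, and both multisets equal $\{v_p(a),v_p(d_n),v_p(r_0),v_p(r_0)\}$. If $p\mid b$, then $\gcd(b,b-1)=1$ forces $v_p(b-1)=0$, whence $v_p(d_n)=v_p(a(b-1))=v_p(a)$; moreover the hypothesis $\gcd(r_0/d_n,b)=1$ forces $v_p(r_0/d_n)=0$, giving $v_p(r_0)=v_p(d_n)=v_p(a)$. Writing $\alpha=v_p(a)$ and $\beta=v_p(b)>0$, both multisets then collapse to $\{\alpha,\alpha,\alpha+\beta,\alpha+\beta\}$.

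The main obstacle is the case $p\mid b$ in the prime-by-prime check: one must use both that $b$ and $b-1$ are coprime (giving $v_p(d_n)=v_p(a)$) and, critically, the hypothesis $\gcd(r_0/d_n,\,d_n/a+1)=1$ (giving $v_p(r_0)=v_p(a)$). It is precisely this alignment of valuations that lets the two collections of cyclic groups rearrange into one another; without the hypothesis the isomorphism would generically fail, which is what pins down exactly where the technical assumption must enter.
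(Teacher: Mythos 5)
Your proposal is correct and takes essentially the same route as the paper's proof: apply Theorem~\ref{thm:criticalgroupstar} to both structures, compute $r_0'=r_0(d_n/a+1)$ from the coprimality hypothesis, cancel the common summand $\bigoplus_{i=1}^{n-1}\ZZ/d_i\ZZ$, and verify the remaining isomorphism of four cyclic summands prime by prime. The only differences are cosmetic: you justify $r_0'=\lcm\bigl(r_0,d_n(d_n/a+1)\bigr)$ directly from divisibility rather than citing Lemma~\ref{lem:primes}, and your two-case valuation check ($p\mid d_n/a+1$ versus $p\nmid d_n/a+1$) condenses the paper's four-case analysis.
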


\begin{proof}
By Theorem~\ref{thm:criticalgroupstar}, we know that 
\[\CK \oplus(\ZZ/r_0\ZZ)^2 \cong \bigoplus_{i=1}^n \mathbb{Z}/d_i\mathbb{Z}\]
and 
\[\CK' \oplus(\ZZ/r_0'\ZZ)^2 \cong \Biggl(\bigoplus_{i=1}^{n-1} \mathbb{Z}/d_i\mathbb{Z}\Biggr) \oplus \ZZ/(d_n+a)\ZZ \oplus \ZZ\big/(d_n(d_n+a)/a)\ZZ.\]
This implies that we will have $\CK'\cong \CK\oplus\ZZ/a\ZZ$ exactly if
\[(\ZZ/r_0\ZZ)^2 \oplus\ZZ/(d_n+a)\ZZ \oplus \ZZ/(d_n(d_n/a+1))\ZZ\cong (\ZZ/r_0'\ZZ)^2 \oplus \ZZ/d_n\ZZ \oplus \ZZ/a\ZZ.\]

Since $r_0$ is equal to $\lcm(d_1,d_2, \dotsc, d_{n-1})$ by Lemma~\ref{lem:primes}, we have that 
\[\begin{split}
r_0'&=\lcm(d_1,d_2, \dotsc,d_{n-1}, d_n+a, d_n(d_n+a)/a)\\
&=\lcm(r_0, d_n+a, d_n(d_n+a)/a)\\
&= \lcm(r_0, d_n(d_n+a)/a)\\
&=d_n\lcm(r_0/d_n, d_n/a+1)\\
& = r_0(d_n/a+1),
\end{split}\]
where the last equality holds by the assumption that $\gcd(r_0/d_n, d_n/a+1)=1$. Therefore we need to show that
\begin{equation}\label{eq:CKandCKprime}
(\ZZ/r_0\ZZ)^2 \oplus\ZZ/(d_n+a)\ZZ \oplus \ZZ/(d_n(d_n/a+1))\ZZ\cong(\ZZ/r_0(d_n/a+1)\ZZ)^2 \oplus \ZZ/d_n\ZZ \oplus \ZZ/a\ZZ.
\end{equation}

Let $b=\gcd(r_0,d_n/a+1)$. Since we have assumed that $\gcd(r_0/d_n, d_n/a+1)=1$, we also have $\gcd(d_n,d_n/a+1)=b,$ and in particular $b\mid a$. We consider the primary factors associated to both sides of~\eqref{eq:CKandCKprime} for each prime $p\mid r_0'=r_0(d_n/a+1)$. Let $p^\alpha$ be the largest power of~$p$ dividing $r_0$, $p^\beta$ be the largest power of~$p$ dividing $d_n$, $p^\gamma$ be the largest power of~$p$ dividing $a$, and $p^\delta$ be the largest power of~$p$ dividing $d_n/a+1$. The primary factors associated to~$p$ of the left side of~\eqref{eq:CKandCKprime} are given by
\[ (\ZZ/p^\alpha\ZZ)^2 \oplus \ZZ/p^{\gamma+\delta}\ZZ \oplus \ZZ/p^{\beta+\delta}\ZZ,\]
and those factors of the right side of~\eqref{eq:CKandCKprime} are given by 
\[(\ZZ/p^{\alpha+\delta}\ZZ)^2 \oplus \ZZ/p^\beta\ZZ \oplus \ZZ/p^\gamma\ZZ.\]

We now show that the primary factors of the sides of~\eqref{eq:CKandCKprime} agree by checking cases.
\begin{itemize}
\item If $p\mid b$, then $p$ also divides $r_0$, $d_n$, $a$, and $d_n/a+1$. Since $\gcd(r_0/d_n, d_n/a+1)=1$, it must be the case that $\alpha=\beta$. Furthermore, since $p\mid (d_n/a+1)$ and $p\mid d_n$, we must have that $\beta=\gamma$. Therefore the primary factors of associated to~$p$ of both sides of~\eqref{eq:CKandCKprime} are $(\ZZ/p^\alpha\ZZ)^2 \oplus(\ZZ/p^{\alpha+\delta}\ZZ)^2$.
\item If $p\mid d_n$ but $p\nmid b$, then $p\nmid (d_n/a+1)$. Therefore $\delta=0$ and the primary factors associated to~$p$ of both sides of~\eqref{eq:CKandCKprime} are $(\ZZ/p^\alpha\ZZ)^2 \oplus \ZZ/p^\beta\ZZ \oplus \ZZ/p^\gamma\ZZ$.
\item If $p\mid r_0$ but $p\nmid d_n$, then $p\mid (r_0/d_n)$. Since $\gcd(r_0/d_n,d_n/a+1)=1$, this means $p\nmid (d_n/a+1)$. Therefore we have $\beta=\gamma=\delta=0$ and thus get $(\ZZ/p^\alpha\ZZ)^2$ as the primary factors of both sides of~\eqref{eq:CKandCKprime}.
\item If $p\nmid r_0$ and $p\mid (d_n/a+1)$, then $\alpha=\beta=\gamma=0$ and we get $(\ZZ/p^\delta\ZZ)^2$ as the primary factors of both sides of~\eqref{eq:CKandCKprime}.\qedhere 
\end{itemize}
\end{proof}

Beginning with an arithmetical structure that has a trivial critical group and satisfies the conditions of the theorem for some $a$ and~$d_n$, we can obtain a structure with a cyclic critical group $\ZZ/a\ZZ$. For example, the critical group associated to $\vdhat=(2,3,11,15,110)$ is trivial and we can take $d_5=110$ and $a=5$. Since $r_0=330$ for this example, we can compute that $\gcd(330/110,110/5+1)=\gcd(3,23)=1$, and thus $\CD_5\bigl(\vdhat\bigr) = (2,3,11,15,115, 2530)$ has critical group $\ZZ/5\ZZ$.

We state the following immediate corollary of Theorem~\ref{thm:expand}. 

\begin{corollary}\label{cor:expand}
Suppose $\vdhat=(d_1,\dotsc, d_{n-1},d_n)$ determines an arithmetical structure on~$S_n$ with $r_0=d_n$ and critical group~$\CK$. For any integer~$a$ that divides $d_n$, the vector $\CD_a\bigl(\vdhat\bigr)$ determines an arithmetical structure on $S_{n+1}$ with critical group $\CK'\cong \CK \oplus \ZZ/a\ZZ$ and $\lcm\bigl(\CD_a\bigl(\vdhat\bigr)\bigr)=d_n(d_n+a)/a$. 
\end{corollary}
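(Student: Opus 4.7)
The plan is to verify that the hypothesis of Theorem~\ref{thm:expand} is automatically satisfied under the stronger assumption $r_0 = d_n$, and then to read off the lcm claim from an intermediate step in the proof of that theorem. Concretely, from $r_0 = d_n$ we get $r_0/d_n = 1$, so for any divisor~$a$ of~$d_n$ we have
\[\gcd(r_0/d_n,\, d_n/a + 1) = \gcd(1,\, d_n/a + 1) = 1\]
trivially. Theorem~\ref{thm:expand} therefore applies directly, and we conclude that the critical group associated to $\CD_a\bigl(\vdhat\bigr)$ is $\CK' \cong \CK \oplus \ZZ/a\ZZ$.

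For the lcm statement, I would recall the computation carried out inside the proof of Theorem~\ref{thm:expand}, which established that whenever the gcd hypothesis holds one has $r_0' = r_0(d_n/a + 1)$. Substituting $r_0 = d_n$ yields
\[r_0' = d_n\bigl(d_n/a + 1\bigr) = \frac{d_n(d_n + a)}{a},\]
which is exactly the claimed lcm of the entries of $\CD_a\bigl(\vdhat\bigr) = (d_1,\dotsc,d_{n-1},d_n + a, d_n(d_n+a)/a)$.

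There is no real obstacle here, since both conclusions are immediate specializations of Theorem~\ref{thm:expand} and of the computation of $r_0'$ contained in its proof; the role of the hypothesis $r_0 = d_n$ is simply to make the coprimality condition automatic so that the theorem may be invoked without any further verification.
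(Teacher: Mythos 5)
Your proof is correct and follows essentially the same route as the paper: the hypothesis $r_0=d_n$ makes the coprimality condition of Theorem~\ref{thm:expand} trivial, so the critical group statement is an immediate specialization. The only cosmetic difference is that the paper computes $\lcm\bigl(\CD_a\bigl(\vdhat\bigr)\bigr)=\lcm(d_n,\,d_n+a,\,d_n(d_n+a)/a)=d_n(d_n+a)/a$ directly, while you quote the identity $r_0'=r_0(d_n/a+1)$ from inside the proof of Theorem~\ref{thm:expand}; both give the same conclusion.
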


\begin{proof}
If $r_0=d_n$, then $r_0/d_n=1$, so the hypotheses of Theorem~\ref{thm:expand} are trivially satisfied. Therefore the critical group of the structure determined by $\CD_a\bigl(\vdhat\bigr)$ is $\CK'\cong \CK \oplus \ZZ/a\ZZ$. Also, $\lcm\bigl(\CD_a\bigl(\vdhat\bigr)\bigr)=\lcm(d_n, d_n+a, d_n(d_n+a)/a) = d_n(d_n+a)/a$.
\end{proof}

Using the construction from Section~\ref{subsec:trivial}, we obtain the following proposition.

\begin{proposition}\label{prop:Sylvesterprimes}
Consider the set of Sylvester primes $\mathcal{SP}$ \cite[A126263]{OEIS}, i.e.\ those primes that divide some number in Sylvester's sequence. For any product $c=\prod p_i$ of distinct primes $p_i\in \mathcal{SP}$, the cyclic group $\ZZ/c\ZZ$ can be realized as a critical group of an arithmetical structure on $S_n$ and~$K_n$ for some~$n$.
\end{proposition}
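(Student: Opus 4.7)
The plan is to iteratively apply Corollary~\ref{cor:expand} to the Sylvester structures from Example~\ref{ex:sylvester}, picking up one new cyclic factor of prime order $p_i$ with each application. Write $c = p_1 p_2 \dotsm p_t$ and, for each $i$, let $k_i$ be such that $p_i \mid s_{k_i}$. Set $K = \max_i k_i$ and begin with the arithmetical structure $\vdhat_0 = \bigl(s_1, s_2, \dotsc, s_K, \prod_{j=1}^{K} s_j\bigr)$ on $S_{K+1}$ from Example~\ref{ex:sylvester}, which has $d_0 = 1$, trivial critical group $\CK_0$, and $r_0 = d_{K+1} = \prod_{j=1}^{K} s_j$ (since the terms of Sylvester's sequence are pairwise coprime). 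By the choice of~$K$, every $p_i$ divides this initial last entry.

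I would then inductively define $\vdhat_{i+1} = \CD_{p_{i+1}}\bigl(\vdhat_i\bigr)$ for $i = 0, 1, \dotsc, t-1$. Letting $D_i$ denote the last entry of~$\vdhat_i$, Corollary~\ref{cor:expand} gives $D_{i+1} = D_i(D_i + p_{i+1})/p_{i+1}$ and tells us this also equals the new value of $\lcm$, so the hypothesis that the last entry equals $r_0$ is preserved at every step, and $\CK_{i+1} \cong \CK_i \oplus \ZZ/p_{i+1}\ZZ$. The main technical step is to verify that the next prime $p_{i+2}$ still divides $D_{i+1}$, so that the following application of~$\CD$ is legal. This is where the distinctness of the $p_i$'s is essential: assuming inductively that $p_j \mid D_i$ for all $j > i$, we factor $D_{i+1} = (D_i/p_{i+1})(D_i + p_{i+1})$, and the factor $D_i/p_{i+1}$ remains divisible by each such $p_j$ (since $p_j \neq p_{i+1}$ and $p_j \mid D_i$), whence $p_j \mid D_{i+1}$.

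After all $t$ iterations, $\vdhat_t$ determines an arithmetical structure on $S_{K+1+t}$ with critical group $\CK_t \cong \bigoplus_{i=1}^{t} \ZZ/p_i\ZZ \cong \ZZ/c\ZZ$ by the Chinese Remainder Theorem, since the $p_i$ are distinct primes. The identity $\frac{1}{d_n} = \frac{1}{d_n+a} + \frac{1}{d_n(d_n+a)/a}$ underlying $\CD_a$ preserves the value of~$d_0$, so $d_0 = 1$ is maintained throughout the iteration; applying the clique-star operation as described in Section~\ref{sec:clique-star} then transfers this to an arithmetical structure on $K_{K+1+t}$ with the same critical group $\ZZ/c\ZZ$, proving the proposition.
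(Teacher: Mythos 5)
Your argument is correct, and it uses the same ingredients as the paper (the $d_0=1$ Sylvester structure of Example~\ref{ex:sylvester}, Corollary~\ref{cor:expand}, and the passage to the complete graph), but it takes an iterative route where the paper does it in one shot. Since the $p_i$ are distinct primes each dividing the last entry $d_n=\prod_j s_j$, the paper observes that $c=\prod_i p_i$ itself divides $d_n$ and applies $\CD_c$ a single time to get critical group $\ZZ/c\ZZ$ directly, with no need for your divisibility-preservation induction or the Chinese Remainder step; your version instead applies $\CD_{p_i}$ once per prime, which is sound --- Corollary~\ref{cor:expand} does guarantee that the new last entry equals the new $\lcm$, so the hypothesis is preserved, and your factorization $D_{i+1}=(D_i/p_{i+1})(D_i+p_{i+1})$ correctly shows the remaining primes still divide the last entry precisely because the $p_i$ are distinct --- but it lands on a larger star ($S_{K+1+t}$ rather than $S_{K+2}$) at the cost of extra bookkeeping, while buying nothing here since Corollary~\ref{cor:expand} already allows composite~$a$. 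The one thing your scheme does offer is that repeated application of $\CD_a$ is exactly the mechanism the paper later uses (Proposition~\ref{prop:allgroups}) to realize noncyclic groups, so your proof generalizes naturally in that direction. A small terminological point: the final step applies the inverse of the clique-star operation (the star-clique direction of the bijection in Section~\ref{sec:clique-star}, valid because $d_0=1$ is preserved by each $\CD_{p_i}$), with the critical-group preservation quoted there; your use of it is otherwise exactly as in the paper.
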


\begin{proof}
Suppose $p_i$ is a Sylvester prime dividing $s_{m_i}$, the $m_i$-th term of Sylvester's sequence. Consider the arithmetical structure on~$S_n$ from Example~\ref{ex:sylvester} with $n=\max(m_i)+1$. Since $s_{m_i}$ must divide $d_n$ for each~$i$, we have that $c=\prod p_i$ also divides~$d_n$. Since $r_0=d_n$ for this structure and the critical group is trivial, we can use Corollary~\ref{cor:expand} to see that $\CD_c\bigl(\vdhat\bigr)= (d_1, \dotsc, d_{n-1}, d_n+c, d_n(d_n+c)/c)$ is an arithmetical structure on $S_{n+1}$ with critical group $\ZZ/c\ZZ$. We have $d_0=1$ from the construction in Example~\ref{ex:sylvester}, so therefore we can apply the star-clique operation to $\CD_c\bigl(\vdhat\bigr)$ to obtain an arithmetical structure on $K_{n+1}$ with critical group $\ZZ/c\ZZ$.
\end{proof}

For example, $13$ is a Sylvester prime dividing $s_6=3263442$. Therefore, one way to attain $\ZZ/13\ZZ$ as a critical group is to start with the arithmetical structure 
\[\vdhat =(2,3,7,43,1807, 3263443, 10650056950806),\] 
which has trivial critical group, and do the above construction with $a=13$ to get 
\[\CD_{13}\bigl(\vdhat\bigr) =(2,3,7,43,1807, 3263443, 10650056950819, 8724901004273049618800778),\]
which has critical group $\ZZ/13\ZZ$.

Since the terms of Sylvester's sequence are coprime to each other, there are infinitely many Sylvester primes, and thus Proposition~\ref{prop:Sylvesterprimes} shows we can obtain infinitely many cyclic groups as critical groups of arithmetical structures on star graphs.

\subsection{High-rank groups}\label{subsec:highrank}

When looking at which noncyclic groups might appear as critical groups of arithmetical structures on~$S_n$, we can bound the number of generators of a critical group in the following way.

\begin{proposition}\label{prop:components}
Let $n\geq2$. Given an arithmetical structure on $S_n$ or~$K_n$, the invariant factor decomposition of the corresponding critical group can have at most $n-2$ components. Moreover, this bound is sharp.
\end{proposition}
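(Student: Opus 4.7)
My plan is to extract the upper bound directly from Theorem~\ref{thm:criticalgroupstar} by performing a primary-decomposition analysis one prime at a time, and then to exhibit the Laplacian-type structure to establish sharpness.

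First, because the clique-star bijection of Section~\ref{sec:clique-star} induces an isomorphism of critical groups between arithmetical structures on $K_n$ and those on $S_n$ with $d_0=1$, it suffices to prove the bound for $S_n$. Fix a prime~$p$ and, as in the discussion following Theorem~\ref{thm:criticalgroupstar}, let $e_{p,1}\leq e_{p,2}\leq \dotsb \leq e_{p,n}$ denote the nondecreasing rearrangement of the $p$-adic valuations $v_p(d_1),\dotsc,v_p(d_n)$. Lemma~\ref{lem:primes} guarantees that the largest power of~$p$ dividing $r_0$ divides at least two of the $d_i$'s, so $e_{p,n-1}=e_{p,n}=v_p(r_0)$. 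Therefore the $p$-primary part of the right-hand side of Theorem~\ref{thm:criticalgroupstar} is
\[\bigoplus_{i=1}^{n}\mathbb{Z}/p^{e_{p,i}}\mathbb{Z},\]
whose top two primary invariant factors coincide with the $p$-primary part of $(\mathbb{Z}/r_0\mathbb{Z})^2$. By the uniqueness of the primary decomposition of a finite abelian group, cancelling these two factors shows that the $p$-primary component of $\mathcal{K}(S_n;\vd,\vr)$ is $\bigoplus_{i=1}^{n-2}\mathbb{Z}/p^{e_{p,i}}\mathbb{Z}$, which has at most $n-2$ nontrivial cyclic summands. Since the number of invariant factors of a finite abelian group equals the maximum, over all primes~$p$, of the number of nontrivial $p$-primary summands, it follows that $\mathcal{K}(S_n;\vd,\vr)$ has at most $n-2$ invariant factors.

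For sharpness, I would take the arithmetical structure on~$S_n$ determined by $\vd=(n,n,\dotsc,n,1)$, corresponding to the solution $n\cdot\frac{1}{n}=1$ of equation~\eqref{eq:diophantine}. Here $r_0=n$, and Theorem~\ref{thm:criticalgroupstar} immediately gives
\[\mathcal{K}(S_n;\vd,\vr)\oplus(\mathbb{Z}/n\mathbb{Z})^2\cong(\mathbb{Z}/n\mathbb{Z})^n,\]
so $\mathcal{K}(S_n;\vd,\vr)\cong(\mathbb{Z}/n\mathbb{Z})^{n-2}$, an invariant factor decomposition with exactly $n-2$ components. Since this structure has $d_0=1$, applying the clique-star operation produces the classical Laplacian arithmetical structure on~$K_n$, which by the Remark following Corollary~\ref{cor:criticalgroupcomplete} has the same critical group $(\mathbb{Z}/n\mathbb{Z})^{n-2}$, confirming sharpness in the complete-graph case as well.

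There is not really a hard step here: the upper bound is essentially handed to us once we combine Lemma~\ref{lem:primes} (which forces the top two $p$-parts of the $d_i$'s to agree) with Theorem~\ref{thm:criticalgroupstar}, and sharpness falls out of the Laplacian example. The only point requiring care is stating the cancellation argument in terms of primary components rather than invariant factors, since subtracting $(\mathbb{Z}/r_0\mathbb{Z})^2$ from a direct sum of cyclic groups does not correspond to removing two invariant factors globally but does correspond to removing two primary summands at each prime, which is exactly what the bound $n-2$ reflects.
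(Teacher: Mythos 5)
Your proof is correct, and both halves land where the paper does, but your route to the upper bound is packaged differently. The paper's argument is matrix-level: it cites, from the proof of Theorem~\ref{thm:criticalgroupstar}, that $D_1(B(\vd))=D_2(B(\vd))=1$, so the first two invariant factors are~$1$ and the critical group can use at most $n-2$ of the remaining ones. You instead work from the statement of Theorem~\ref{thm:criticalgroupstar} together with Lemma~\ref{lem:primes}, cancelling $(\ZZ/r_0\ZZ)^2$ one prime at a time. What your version buys: it makes explicit a step the paper leaves implicit (the invariant factors read off from $B(\vd)$ describe $\mathcal{K}(S_n;\vd,\vr)\oplus(\ZZ/r_0\ZZ)^2$, so one still needs to see that stripping off $(\ZZ/r_0\ZZ)^2$ removes exactly two cyclic summands at each relevant prime, which is precisely your observation $e_{p,n-1}=e_{p,n}=v_p(r_0)$ from Lemma~\ref{lem:primes}), and as a byproduct you recover the explicit primary decomposition of the critical group, matching the formula $\alpha_k=\prod p^{e_{p,k-2}}$ given in the discussion after Theorem~\ref{thm:criticalgroupstar}. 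The sharpness argument is the same as the paper's: $\vdhat=(n,n,\dotsc,n)$ on~$S_n$, equivalently the Laplacian structure on~$K_n$, with critical group $(\ZZ/n\ZZ)^{n-2}$. One cosmetic slip: passing from your $S_n$ structure with $d_0=1$ to~$K_n$ uses the inverse (star-clique) direction of the clique-star bijection rather than the clique-star operation itself, but since the bijection preserves critical groups this does not affect the argument.
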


\begin{proof}
For star graphs, the proof of Theorem~\ref{thm:criticalgroupstar} shows that $D_1(B(\vd))=D_2(B(\vd))=1$. This implies that $\alpha_1=\alpha_2=1$, which means the invariant factor decomposition of the critical group has at most $n-2$ components. Because all arithmetical structures on~$K_n$ correspond to arithmetical structures on~$S_n$ with the same critical group, the same result holds for~$K_n$.

We can show that this bound is sharp by considering the Laplacian arithmetical structure $\vr=\mathbf{1}$ on~$K_n$, which is well known to have critical group $(\ZZ/n\ZZ)^{n-2}$. The corresponding arithmetical structure on~$S_n$, determined by $\vdhat = (n,n,\dotsc,n)$, also has critical group $(\ZZ/n\ZZ)^{n-2}$.
\end{proof}

To obtain more examples of higher-rank critical groups, we consider the $\CD_a$~construction of the previous subsection and note that if $a\mid d_n$ then $a\mid d_n(d_n+a)/a$. Therefore we can inductively use this construction to add multiple copies of $\ZZ/a\ZZ$ to a critical group. As an example, starting with the arithmetical structure on~$S_2$ determined by $\vdhat=(2,2)$ and repeatedly applying the construction with $a\in\{1,2\}$ we can obtain arithmetical structures with critical group $(\ZZ/2\ZZ)^m$ on~$S_n$ for each $m\in\{0,1,\dotsc,n-2\}$. Because each of these structures has $d_0=1$, this result also translates to~$K_n$.

For example, if we want to attain $(\ZZ/2\ZZ)^{n-2}$ on~$S_n$ for each~$n$, we would use $a=2$ repeatedly to obtain the following sequence of arithmetical structures.
\begin{center}
    \begin{tabular}{lll}
    $n$ & $\vdhat$ & $\CK$ \\
    2 & $(2,2)$ & trivial group \\
    3 & $(2,4,4)$ & $\ZZ/2\ZZ$ \\
    4 & $(2,4,6,12)$ & $(\ZZ/2\ZZ)^2$ \\
    5 & $(2,4,6,14,84)$ & $(\ZZ/2\ZZ)^3$ \\
    6 & $(2,4,6,14,86,3612)$ & $(\ZZ/2\ZZ)^4$
    \end{tabular}
\end{center}

This construction generalizes and allows us to construct many more examples of critical groups associated to star and complete graphs. For example, starting with $\vdhat=(3,3,3)$ we can obtain the following critical groups on~$S_n$:
\begin{itemize}
    \item $(\ZZ/3\ZZ)^m$ for all $m\in[n-2]$, and
    \item $(\ZZ/3\ZZ)^{m_3} \oplus (\ZZ/2\ZZ)^{m_2}$ for all $m_3\in \{2,3,\dotsc,n-2\}$ and $m_2\in\{0,1,\dotsc,n-4\}$. 
\end{itemize}

From $\vdhat=(2,5,5,10)$, we can get $(\ZZ/2\ZZ)^{m_2} \oplus (\ZZ/5\ZZ)^{m_5}$ for all $m_2\in\{0,1,\dotsc,n-4\}$ and $m_5\in\{1,2,\dotsc,n-3\}$. 

Recall that for a finite abelian group $\bigoplus_{k=1}^t \ZZ/m_k\ZZ$ the \emph{exponent} of the group is defined to be $\lcm(m_1,m_2,\dotsc,m_{t})$. As a full generalization of the ideas of this section, we obtain the following result.

\begin{proposition}\label{prop:allgroups}
Suppose $G$ is a finite abelian group of exponent~$e$. If there exists a set $\{d_i\}_{i=1}^n$ of pairwise relatively prime integers such that $\sum_{i=1}^{n} \frac{1}{d_i}+\frac{1}{\prod_{i=1}^{n} d_i} =1$ and $e$ divides $\prod_{i=1}^{n} d_i$, then there is an arithmetical structure on a star graph with critical group~$G$.
\end{proposition}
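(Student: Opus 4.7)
The plan is to combine the trivial-critical-group construction from Section~\ref{subsec:trivial} with an iterated application of the operation~$\CD_a$ from Corollary~\ref{cor:expand}: first build an arithmetical structure on a star graph whose critical group is trivial and whose last $\vdhat$-entry is divisible by~$e$, then successively attach cyclic summands $\ZZ/m_k\ZZ$ corresponding to the invariant factor decomposition of~$G$.

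First I would set $d_{n+1}\coloneqq\prod_{i=1}^n d_i$ and consider $\vdhat^{(0)}=(d_1,\dotsc,d_n,d_{n+1})$. By hypothesis, $\sum_{i=1}^{n+1}\tfrac{1}{d_i}=1$, so this is an arithmetical-structure vector on~$S_{n+1}$ with $d_0=1$. Since $d_1,\dotsc,d_n$ are pairwise coprime, $r_0=\lcm(d_1,\dotsc,d_{n+1})=d_{n+1}$, and the Chinese Remainder Theorem gives $\bigoplus_{i=1}^{n+1}\ZZ/d_i\ZZ\cong(\ZZ/d_{n+1}\ZZ)^2$. Theorem~\ref{thm:criticalgroupstar} then forces the critical group $\CK^{(0)}$ to be trivial, as already observed in Section~\ref{subsec:trivial}.

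Next, I would write $G\cong\bigoplus_{k=1}^t\ZZ/m_k\ZZ$ in invariant factor form with $m_1\mid\dotsb\mid m_t=e$, so that each $m_k$ divides~$e$ and hence divides~$d_{n+1}$. Setting $\vdhat^{(k)}\coloneqq\CD_{m_k}\bigl(\vdhat^{(k-1)}\bigr)$ for $k=1,2,\dotsc,t$, I would prove by induction on~$k$ that (a)~the last entry of $\vdhat^{(k)}$ equals the current~$r_0$ and is a multiple of~$d_{n+1}$, and (b)~the corresponding critical group is $\CK^{(k)}\cong\bigoplus_{j=1}^k\ZZ/m_j\ZZ$. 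Property~(a) follows from Corollary~\ref{cor:expand}: if the last entry of $\vdhat^{(k-1)}$ is~$D$, then the last entry of $\vdhat^{(k)}$ is $D(D+m_k)/m_k=D\cdot(D/m_k+1)$, which equals the new~$r_0$ and is a multiple of~$D$, hence of~$d_{n+1}$. This in turn ensures that the divisibility hypothesis $m_{k+1}\mid D^{(k)}$ of Corollary~\ref{cor:expand} holds at the next step, and property~(b) is then the critical-group conclusion of that corollary. After $t$ steps, $\CK^{(t)}\cong G$, proving the proposition.

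The argument is essentially bookkeeping: the substantive content has already been packaged in Corollary~\ref{cor:expand}, and the only thing one must verify is that its hypotheses continue to hold after each application. The hypothesis $e\mid\prod_{i=1}^n d_i$ is precisely what guarantees this, since it propagates to the divisibility of every subsequent last entry by each~$m_k$.
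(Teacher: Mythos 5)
Your proposal is correct and follows essentially the same route as the paper: build the trivial-critical-group structure $\vdhat^{(0)}=(d_1,\dotsc,d_n,\prod_i d_i)$ with $r_0$ equal to the last entry, then iterate $\CD_{m_k}$ using Corollary~\ref{cor:expand}, checking that the last entry remains the lcm and stays divisible by $\prod_i d_i$ (hence by every invariant factor) so the hypotheses persist. The only difference is cosmetic: you attach the invariant factors smallest-first while the paper attaches them largest-first, which changes nothing since in either order each new last entry is a multiple of the previous one.
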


\begin{proof}
It follows from the hypotheses that $\vdhat = (d_1,d_2,\dotsc,d_n,\prod d_i)$ defines an arithmetical structure on $S_{n+1}$ with trivial critical group.

Let $G$ be a finite abelian group whose exponent is $e$, so that in particular we can write $G \cong \bigoplus_{k=1}^t \ZZ/a_k\ZZ$, where $a_{t}\mid e$ and $a_k\mid a_{k+1}$ for all $k\in[t-1]$. We recall from Corollary~\ref{cor:expand} that $\CD_{a_{t}}\bigl(\vdhat\bigr)$ gives an arithmetical structure on $S_{n+2}$ with critical group $\ZZ/a_{t}\ZZ$. Moreover, $\CD_{a_{t}}\bigl(\vdhat\bigr)$ will be a vector whose last entry is the least common multiple of the other entries. We can use the operation repeatedly and obtain that 
\[\CD_{a_{1}}\bigl(\CD_{a_{2}}\bigl(\dotsb \bigl(\CD_{a_{t}}\bigl(\vdhat\bigr)\bigr)\dotsb\bigr)\bigr)\] 
gives an arithmetical structure on $S_{n+t+1}$ with critical group~$G$.
\end{proof}

As an example, consider $\vdhat=(2,3,11,23,31,47058)$. The group 
\[H=(\ZZ/2\ZZ)^3\oplus(\ZZ/11\ZZ)^2\oplus\ZZ/31\ZZ\] 
has exponent $2\cdot11\cdot31=682\mid47058$, and we can write $H=\ZZ/2\ZZ\oplus \ZZ/22\ZZ\oplus \ZZ/682\ZZ $. Therefore, we can compute 
\[\begin{split}
\CD_{2}\bigl(\CD_{22}\bigl(\CD_{682}\bigl(\vdhat\bigr)\bigr)\bigr) & = \CD_{2}(\CD_{22}(\CD_{682}(2,3,11,23,31,47058)))\\
& =\CD_{2}(\CD_{22}(2,3,11,23,31,47740,3294060))\\
& =\CD_{2}(2,3,11,23,31,47740,3294082,493222897860),
\end{split}\]
which equals
\[(2,3,11,23,31,47740,3294082,493222897862,121634413487201219187660).\]
Thus we have constructed an arithmetical structure with critical group~$H$.

It was conjectured in~\cite{BH} (and was originally stated as a question by Erd\H{o}s) that for any set of pairwise relatively prime integers $\{d_i\}_{i=1}^{k}$ with $\sum_{i=1}^{k} \frac{1}{d_i}<1$ there exists an integer $m>k$ and integers $\{d_i\}_{i=k+1}^{m}$ such that the $\{d_i\}_{i=1}^{m}$ are pairwise relatively prime and $ \sum_{i=1}^{m} \frac{1}{d_i}+\frac{1}{\prod_{i=1}^{m} d_i} =1$. The $k=1$ case of this conjecture combined with Proposition~\ref{prop:allgroups} would answer Question~\ref{ques:allgroups} in the affirmative. 

\subsection{Products of groups}\label{subsec:products}

Let $\vd'$ be an arithmetical structure on~$S_n$ with critical group~$\CK'$ and $\vd''$ an arithmetical structure on~$S_m$ with critical group~$\CK''$. Consider the structure obtained by taking $\vdhat$ to be the concatenation of $\widehat{\vd'}$ and~$\widehat{\vd''}$. It is clear that the sum of the reciprocals of the entries of~$\vdhat$ will be an integer (and that the value of~$d_0$ will be the sum of $d_0'$ and~$d_0''$) and therefore that $\vdhat$ defines an arithmetical structure on $S_{m+n}$. Moreover, from Theorem~\ref{thm:criticalgroupstar} we have that 
\[\begin{split}
\CK \oplus (\ZZ/r_0\ZZ)^2 & \cong \bigoplus_{i=1}^{n+m} \ZZ/d_i\ZZ \\
& \cong \Biggl(\bigoplus_{i=1}^n \ZZ/d_i'\ZZ\Biggr) \oplus \Biggl(\bigoplus_{i=1}^m \ZZ/d_i''\ZZ\Biggr) \\
& \cong \CK' \oplus (\ZZ/r_0'\ZZ)^2 \oplus \CK'' \oplus (\ZZ/r_0''\ZZ)^2.
\end{split}\]

The set of entries of~$\vdhat$ is the union of the sets of entries in $\widehat{\vd'}$ and~$\widehat{\vd''}$, so therefore $r_0=\lcm(r_0',r_0'')$. Since for any two positive integers $a$ and~$b$ we have that $\ZZ/a\ZZ \oplus \ZZ/b\ZZ \cong (\ZZ/\lcm(a,b)\ZZ) \oplus (\ZZ/\gcd(a,b)\ZZ)$, it follows that
\[\CK \cong \CK' \oplus \CK'' \oplus (\ZZ/\gcd(r_0',r_0'')\ZZ)^2.\]
Noting that the least common multiple of the $\vd$-values on the leaves is the $\vr$-value on the central vertex, this implies that if we have two arithmetical structures on $S_m$ and~$S_n$ whose $\vr$-values at their respective central vertices are relatively prime then we can obtain a new arithmetical structure on $S_{m+n}$ whose critical group is the direct sum of the individual critical groups. However, this new structure will have a $\vd$-value greater than~$1$ at the central vertex and therefore these constructions do not translate to complete graphs.

For example, if we take the structure ${\vd'}=(3,3,7,7,21,1)$ with $\vr'=(7,7,3,3,1,21)$ and the structure ${\vd''}=(2,5,5,10,1)$ with $\vr''=(5,2,2,1,10)$, we have that $\CK'=\ZZ/21\ZZ$ and $\CK''=\ZZ/5\ZZ$. Consider the structure given by $\vd=(3,3,7,7,21,2,5,5,10,2)$ with $\vr=(70,70,30,30,10,105,42,42,21,210)$. It follows from the above argument that the critical group of this structure is $\CK\cong \CK'\oplus\CK''\cong \ZZ/105\ZZ$. 

\begin{remark}
Not every arithmetical structure on~$S_n$ with $d_0\geq2$ comes from two or more smaller structures in the manner described above. For example, the structure with $\vdhat=(2,3,3,5,5,5,5,30)$ on~$S_8$ has $d_0=2$, but there is no way to realize this as a concatenation of two other $\vdhat$-structures on smaller star graphs.
\end{remark}

\subsection{Every group is the subgroup of a critical group} \label{subsec:subgroup}

In this subsection, we show that while we cannot yet prove that every finite abelian group appears as a critical group of an arithmetical structure on a star graph, it is the case that we can obtain every group as a subgroup.

\begin{proposition}\label{prop:subgroup}
Given any finite abelian group~$G$, there is some~$n$ and some arithmetical structure $(\vd,\vr)$ on~$S_n$ such that $G\leq \mathcal{K}(S_n; \vd, \vr)$.
\end{proposition}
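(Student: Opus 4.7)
The plan is to embed $G$ directly into the critical group of a Laplacian arithmetical structure on a sufficiently large star graph. The key observation, noted in the remark following Corollary~\ref{cor:criticalgroupcomplete}, is that the Laplacian arithmetical structure on $S_n$ (where $\widehat{\vd} = (n,n,\dotsc,n)$ and $d_0 = 1$) has critical group $(\ZZ/n\ZZ)^{n-2}$. It therefore suffices to find a single value of $n$ for which $G$ embeds in $(\ZZ/n\ZZ)^{n-2}$.

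By the fundamental theorem of finite abelian groups, write $G \cong \bigoplus_{k=1}^{t}\ZZ/a_k\ZZ$ via its invariant factor decomposition, with $a_1 \mid a_2 \mid \cdots \mid a_t$, and let $e = a_t$ be the exponent of $G$. Since each $a_k$ divides $e$, the group $G$ embeds as a subgroup of $(\ZZ/e\ZZ)^t$. Now choose $n$ to be any multiple of $e$ satisfying $n \geq t+2$; for instance, $n = e(t+2)$ works. Because $e \mid n$, the cyclic group $\ZZ/e\ZZ$ embeds in $\ZZ/n\ZZ$ as its unique subgroup of order $e$, so $(\ZZ/e\ZZ)^t$ embeds in $(\ZZ/n\ZZ)^t$. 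Since $t \leq n-2$, this in turn sits inside $(\ZZ/n\ZZ)^{n-2}$ as a direct summand. Composing these embeddings gives $G \leq (\ZZ/n\ZZ)^{n-2} = \mathcal{K}(S_n;\vd,\vr)$, as required.

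No substantive obstacle is anticipated: the choice $n = e(t+2)$ simultaneously ensures $e \mid n$ (so that $\ZZ/e\ZZ$ sits inside $\ZZ/n\ZZ$) and $n - 2 \geq t$ (so that $(\ZZ/n\ZZ)^{n-2}$ has enough copies of $\ZZ/n\ZZ$ to absorb $G$). The trivial case $G = 0$ is covered by, say, $n=2$, where the critical group is trivial and contains $G$ vacuously. I note in passing that the concatenation construction from Section~\ref{subsec:products} offers an alternative route, building up $G$ as a subgroup one cyclic summand at a time via Laplacian structures on smaller star graphs, but the Laplacian-on-a-single-large-star argument above is shorter and suffices.
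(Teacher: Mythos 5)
Your proof is correct, and it takes a genuinely different route from the paper. You embed $G$ into the single homocyclic group $(\ZZ/n\ZZ)^{n-2}$ arising from the Laplacian structure $\vdhat=(n,n,\dotsc,n)$ (which indeed has $d_0=1$, $r_0=n$, and critical group $(\ZZ/n\ZZ)^{n-2}$ by Theorem~\ref{thm:criticalgroupstar}), choosing $n$ to be a multiple of the exponent of $G$ with $n\geq t+2$; all the embedding steps ($\ZZ/a_k\ZZ\leq\ZZ/e\ZZ\leq\ZZ/n\ZZ$ and $t\leq n-2$) check out, and the trivial group is handled. The paper instead builds a bespoke structure whose leaves carry the primary factors $p_i^{e_i}$ of $G$ together with extra leaves labeled $r_0$ (so $r_i=1$ there), and then invokes Proposition~\ref{prop:CriticalGroupStarWithTwo1} to compute the critical group exactly as $G\oplus(\ZZ/r_0\ZZ)^{\ell}$. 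The trade-off: your argument is shorter and needs only the already-recorded Laplacian computation, and since your structure has $d_0=1$ it even transfers immediately to $K_n$ via the clique-star correspondence (essentially giving Proposition~\ref{prop:Ksubgroup} for free, without Proposition~\ref{prop:maked1}); but it only exhibits $G$ as a subgroup, since $\ZZ/a_k\ZZ$ need not be a direct summand of $\ZZ/n\ZZ$ when $\gcd(a_k,n/a_k)>1$, whereas the paper's construction identifies the critical group exactly and thus yields the stronger statement, noted right after its proof, that $G\oplus H$ is \emph{realized} as a critical group for some finite abelian $H$.
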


\begin{proof}
Let $G$ be a finite abelian group, let $\bigoplus_{i=1}^m \mathbb{Z}/p_i^{e_i}\mathbb{Z}$ be its primary decomposition, and let $r_0=\lcm(p_1^{e_1}, p_2^{e_2}, \dotsc, p_m^{e_m})$. Note that there are some nonnegative integers $k$ and~$\ell$ such that 
\[kr_0 = 2+\ell+\sum_{i=1}^m \frac{r_0}{p_i^{e_i}}.\]
Indeed, one can take 
\[k=\biggl\lceil \biggl(2+\sum_{i=1}^m \frac{r_0}{p_i^{e_i}}\biggr)\Bigm/r_0\biggr\rceil \quad\text{ and }\quad \ell=kr_0-\sum_{i=1}^m \frac{r_0}{p_i^{e_i}}-2.\]
In this case, we can take $n=m+2+\ell$ and let the arithmetical structure be given by $\vd$-values $d_i = {p_i}^{e_i}$ for $i \in [m]$, $d_{i}=r_0$ for $i\in\{m+1,m+2,\dotsc,n\}$, and $d_0=k$, and $\vr$-values $r_0=\lcm(p_1^{e_1}, p_2^{e_2}, \dotsc, p_m^{e_m})$, $r_i=r_0/p_i^{e_i}$ for $i \in [m]$, and $r_{i}=1$ for $i\in\{m+1,m+2,\dotsc,n\}$. In this case, by Proposition~\ref{prop:CriticalGroupStarWithTwo1}, the critical group is $\mathcal{K}(S_n; \vd, \vr)\cong G \oplus (\mathbb{Z}/r_0\mathbb{Z})^\ell$.
\end{proof}

The proof of this proposition shows that something stronger is actually true. For any finite abelian group~$G$, there is some other finite abelian group~$H$ such that $G\oplus H$ appears as a critical group of some arithmetical structure on a star graph.

In general, the $\ell$ from the proof of Proposition~\ref{prop:subgroup} can be large. For example, if we use this approach to find an arithmetical structure on a star graph whose critical group contains $G=(\mathbb{Z}/10\mathbb{Z})^2\oplus\mathbb{Z}/25\mathbb{Z}\oplus \mathbb{Z}/3\mathbb{Z}$, we would take $n=68$ together with $\vr=(15,15,6,50,1,1,\dotsc,1,150)$ and $\vd=(10,10,25,3,150,150,\dotsc,150,1)$. In this case, the critical group would be 
\[K(S_{68};\vd,\vr) \cong (\mathbb{Z}/10\mathbb{Z})^2\oplus\mathbb{Z}/25\mathbb{Z}\oplus \mathbb{Z}/3\mathbb{Z} \oplus (\mathbb{Z}/150\mathbb{Z})^{62}.\]

\begin{proposition}\label{prop:maked1}
Suppose $(\vd,\vr)$ is an arithmetical structure on~$S_n$ for $n\geq1$. Define $(\vd',\vr')$ by setting $d'_i=d_0d_i$ and $r'_i=r_i$ for all $i\in[n]$, $d'_0=1$, and $r'_0=d_0r_0$. We have that $(\vd',\vr')$ is an arithmetical structure on~$S_n$ and $\mathcal{K}(S_n; \vd, \vr)\leq\mathcal{K}(S_n; \vd', \vr')$.
\end{proposition}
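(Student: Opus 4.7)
The plan has two parts. First, verifying that $(\vd',\vr')$ is an arithmetical structure on $S_n$ amounts to routine checks: at each leaf $v_i$ we have $d_i' r_i' = d_0 d_i r_i = d_0 r_0 = r_0'$, and at the central vertex $d_0' r_0' = 1\cdot d_0 r_0 = \sum_{i=1}^n r_i = \sum_{i=1}^n r_i'$. For primitivity of $\vr'=(r_1,\dotsc,r_n,d_0 r_0)$, note that $r_i \mid r_0$ for each $i\in[n]$ (since $d_i r_i = r_0$), so any prime dividing every $r_i$ would also divide $r_0$, contradicting $\gcd(r_0,r_1,\dotsc,r_n)=1$; thus $\gcd(r_1,\dotsc,r_n)=1$, which forces $\gcd(d_0 r_0,r_1,\dotsc,r_n)=1$.

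For the containment $\mathcal{K}(S_n;\vd,\vr)\leq\mathcal{K}(S_n;\vd',\vr')$, I would apply Theorem~\ref{thm:criticalgroupstar} to both structures and compare their primary decompositions one prime at a time. Fix a prime~$p$, let $b = v_p(d_0)$ and $a_i = v_p(d_i)$, and let $e_1\leq e_2\leq\dotsb\leq e_n$ be the sorted reindexing of $(a_1,\dotsc,a_n)$, so that $v_p(r_0)=e_n$ and, by Lemma~\ref{lem:primes}, $e_{n-1}=e_n$. Taking $p$-primary parts in Theorem~\ref{thm:criticalgroupstar} yields $\mathcal{K}(S_n;\vd,\vr)_p \oplus (\ZZ/p^{e_n}\ZZ)^2 \cong \bigoplus_{k=1}^n \ZZ/p^{e_k}\ZZ$. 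The Krull--Schmidt uniqueness of primary decomposition, together with $e_{n-1}=e_n$, lets me cancel the two top summands to obtain $\mathcal{K}(S_n;\vd,\vr)_p \cong \bigoplus_{k=1}^{n-2} \ZZ/p^{e_k}\ZZ$. Since $v_p(d_i') = b + a_i$ for all~$i$, the sorted exponents for~$\vd'$ are $b+e_1\leq\dotsb\leq b+e_n$, and the identical calculation gives $\mathcal{K}(S_n;\vd',\vr')_p \cong \bigoplus_{k=1}^{n-2} \ZZ/p^{b+e_k}\ZZ$.

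The required embedding is then produced summand by summand: multiplication by~$p^b$ gives an injection $\ZZ/p^{e_k}\ZZ \hookrightarrow \ZZ/p^{b+e_k}\ZZ$, since its image $p^b\ZZ/p^{b+e_k}\ZZ$ is cyclic of order exactly~$p^{e_k}$. Taking the direct sum of these injections, first over~$k$ and then over all primes~$p$, yields the desired embedding $\mathcal{K}(S_n;\vd,\vr)\hookrightarrow \mathcal{K}(S_n;\vd',\vr')$. No serious obstacle is anticipated; the argument is mostly bookkeeping with $p$-adic valuations, and the only delicate step is the cancellation, which is exactly what Lemma~\ref{lem:primes} is engineered to justify.
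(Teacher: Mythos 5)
Your proof is correct, but it takes a different route from the paper's. The paper argues at the level of the Smith normal form data: since $d'_i=d_0d_i$ and $D_k(B(\vd))$ equals the greatest common divisor $G_k\bigl(\vdhat\bigr)$ of products of $k-2$ entries of $\vdhat$ (Lemmas~\ref{lem:minorsofB1} and~\ref{lem:minorsofB2}), it gets $D_k(B(\vd'))=d_0^{k-2}D_k(B(\vd))$ for $k\in\{3,\dotsc,n\}$, hence $\alpha'_k=d_0\alpha_k$ and $\alpha_k\mid\alpha'_k$, which yields the embedding immediately. You instead work only from the statement of Theorem~\ref{thm:criticalgroupstar}: you pass to $p$-primary components, use Lemma~\ref{lem:primes} to see that the largest exponent $e_n=v_p(r_0)$ occurs at least twice among the $v_p(d_i)$ so that $(\ZZ/p^{e_n}\ZZ)^2$ can be cancelled (cancellation is legitimate for finite abelian groups), and then embed $\ZZ/p^{e_k}\ZZ\hookrightarrow\ZZ/p^{e_k+v_p(d_0)}\ZZ$ summand by summand. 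The two arguments encode the same computation---your primary decomposition is exactly the formula $\alpha_k=\prod_p p^{e_{p,k-2}}$ recorded after Theorem~\ref{thm:criticalgroupstar}, and the uniform exponent shift by $v_p(d_0)$ is precisely $\alpha'_k=d_0\alpha_k$---but yours requires only the theorem's statement plus Lemma~\ref{lem:primes}, and your cancellation step makes explicit why the theorem pins down $\CK$ at all, whereas the paper's version is shorter because the $D_k$ machinery is already in hand and recovers the exact relation between the invariant factors. Your verification that $(\vd',\vr')$ is an arithmetical structure, which the paper dismisses as straightforward, is complete, and your implicit use of $r'_0=d_0r_0=\lcm(d'_1,\dotsc,d'_n)$ when applying the theorem to the new structure is justified, since every arithmetical structure on $S_n$ has $r_0=\lcm(d_1,\dotsc,d_n)$. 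The only loose end is the case $n=1$, where Lemma~\ref{lem:primes} does not apply; there $d_1=d_0=1$ forces both critical groups to be trivial, so the statement holds trivially.
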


\begin{proof}
It is straightforward to check that $(\vd',\vr')$ is an arithmetical structure on~$S_n$.

Let $\alpha_k$ be the invariant factors of $\mathcal{K}(S_n; \vd, \vr)$, and let $\alpha'_k$ be the invariant factors of $\mathcal{K}(S_n; \vd', \vr')$. Since $d'_i=d_0d_i$ for all $i\in[n]$, we have that $D_k(B(\vd'))=d_0^{k-2}D_k(B(\vd))$ for all $k\in\{3,4,\dotsc,n\}$ and $D_1(B(\vd'))=D_2(B(\vd'))=1$. If follows that $\alpha'_k=d_0\alpha_k$ for all $k\in\{3,4,\dotsc,n\}$ and that $\alpha'_1=\alpha_1=1$ and $\alpha'_2=\alpha_2=1$. Therefore $\alpha_k$ divides $\alpha'_k$ for all $k\in[n]$, so $\mathcal{K}(S_n; \vd, \vr)\leq\mathcal{K}(S_n; \vd', \vr')$.
\end{proof}

For example, suppose $\vd=(1, 2, 2, 3, 3, 6, 6, 3)$ and $\vr=(6, 3, 3, 2, 2, 1, 1, 6)$. In this case, the critical group is 
\[\CK(S_7;\vd,\vr)\cong(\ZZ/1\ZZ)\oplus(\ZZ/2\ZZ)^2\oplus(\ZZ/3\ZZ)^2 \cong (\ZZ/1\ZZ)^3\oplus(\ZZ/6\ZZ)^2.\] 
If we set $r_0'=3\cdot 6=18$, then we get $\vd'=(3,6,6,9,9,18,18,1)$ and 
\[\CK(S_7,\vd',\vr') \cong (\ZZ/3\ZZ)\oplus (\ZZ/6\ZZ)^2 \oplus (\ZZ/9\ZZ)^2 \cong (\ZZ/3\ZZ)^3\oplus (\ZZ/18\ZZ)^2.\]

\begin{proposition}\label{prop:Ksubgroup}
Given any finite abelian group~$G$, there is some~$n$ and some arithmetical structure $(\vd,\vr)$ on~$K_n$ such that $G\leq \mathcal{K}(K_n; \vd, \vr)$.
\end{proposition}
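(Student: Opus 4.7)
The plan is to combine the three preceding results in a direct chain: Proposition~\ref{prop:subgroup} produces an arithmetical structure on a star graph whose critical group contains~$G$, Proposition~\ref{prop:maked1} converts this into a structure with $d_0=1$ without losing~$G$ as a subgroup, and then the clique-star bijection of Section~\ref{sec:clique-star} transfers the resulting structure to one on a complete graph with the same critical group.

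More concretely, given a finite abelian group~$G$, I would first apply Proposition~\ref{prop:subgroup} to obtain an integer~$n$ and an arithmetical structure $(\vd,\vr)$ on~$S_n$ with $G\leq \mathcal{K}(S_n;\vd,\vr)$. In general this structure will have $d_0\geq2$, so it does not correspond to an arithmetical structure on~$K_n$ under the clique-star bijection. I would then apply Proposition~\ref{prop:maked1} to produce a new arithmetical structure $(\vd',\vr')$ on~$S_n$ with $d_0'=1$, satisfying
\[G\leq\mathcal{K}(S_n;\vd,\vr)\leq\mathcal{K}(S_n;\vd',\vr').\]

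Finally, since $(\vd',\vr')$ has $d_0'=1$, the clique-star bijection (\cite[Theorems 5.1 and 5.3]{CV}, as recalled in Section~\ref{sec:clique-star}) produces an arithmetical structure $(\vd'',\vr'')$ on~$K_n$ whose critical group is isomorphic to $\mathcal{K}(S_n;\vd',\vr')$. Hence $G\leq\mathcal{K}(K_n;\vd'',\vr'')$, as desired. There is no real obstacle here; each of the three inputs is already established, and the only thing to verify is that they compose: Proposition~\ref{prop:maked1} preserves the property ``contains~$G$'' while forcing $d_0'=1$, which is exactly what is needed for the clique-star step to apply.
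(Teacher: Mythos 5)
Your proposal is correct and matches the paper's proof exactly: it chains Proposition~\ref{prop:subgroup}, then Proposition~\ref{prop:maked1} to force $d_0=1$ while preserving $G$ as a subgroup, and finally the clique-star correspondence to transfer the structure to~$K_n$ with isomorphic critical group. No gaps; this is essentially the same argument.
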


\begin{proof}
First apply Proposition~\ref{prop:subgroup} to get an arithmetical structure $(\vd,\vr)$ on~$S_n$ for which $G\leq \mathcal{K}(S_n; \vd, \vr)$. Then apply Proposition~\ref{prop:maked1} to get $\mathcal{K}(S_n; \vd, \vr)\leq\mathcal{K}(S_n; \vd', \vr')$ for an arithmetical structure $(\vd',\vr')$ on~$S_n$ with $d'_0=1$. Finally use the star-clique operation to get an arithmetical structure $(\vd'',\vr'')$ on~$K_n$ with $\mathcal{K}(K_n; \vd'', \vr'')\cong\mathcal{K}(S_n; \vd', \vr')$. We then have that $G\leq\mathcal{K}(K_n; \vd'', \vr'')$.
\end{proof}

\subsection{Largest critical group}\label{subsec:order}

Let us consider the following question.

\begin{question}\label{ques:maxorder}
What is the largest order of a critical group of an arithmetical structure on~$S_n$ \textup{(}and on~$K_n$\textup{)}?
\end{question}

For $n \in \{2,3,4,5,6,7\}$, a brute force computation finds the largest order of a critical group of an arithmetical structure on~$S_n$ (and on~$K_n$), shown in the following table. 

\begin{center}
\begin{tabular}{c|c|c|c|c|c|c}
$n$ & $2$ & $3$ & $4$ & $5$ & $6$ & $7$ \\ \hline
largest order & $1$ & $3$ & $16$ & $128$ & $5292$ & $9784908$ 
\end{tabular}
\end{center}

Let $a_n$ be the sequence defined by the recursion $a_n=a_{n-1}^2 + a_{n-1}$ with $a_1=1$. This sequence is described at \cite[A007018]{OEIS}. The first few terms are 
\[1,2, 6, 42, 1806, 3263442, 10650056950806, 113423713055421844361000442,\dotsc.\]
Note that this sequence is exactly $\{s_{n}-1\}$, where $s_n$ is the $n$-th term of Sylvester's sequence. Equivalently, $a_n=\prod_{i=1}^{n-1} s_i$. 

\begin{theorem}
If $\CK$ is a critical group of an arithmetical structure on~$S_n$ \textup{(}or~$K_n$\textup{)} for $n\geq2$, then $\abs{\CK}<\frac{n!}{2}a_{n-2}^2$.
\end{theorem}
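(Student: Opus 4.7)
The plan is to bound $|\CK|$ via Theorem~\ref{thm:criticalgroupstar} by controlling a product of $n-2$ of the $d_i$'s, and then to bound that product using two Curtiss-style inequalities together with the growth of Sylvester's sequence.

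First I would apply Theorem~\ref{thm:criticalgroupstar} to write $|\CK|\cdot r_0^2 = \prod_{i=1}^n d_i$, and relabel so that $d_1 \leq d_2 \leq \cdots \leq d_n$. Since both $d_{n-1}$ and $d_n$ divide $r_0$, one has $r_0^2 \geq d_{n-1} d_n$, and therefore
\[|\CK| \leq \prod_{i=1}^{n-2} d_i.\]
Setting $\delta_k \coloneqq \sum_{i=k}^n 1/d_i = d_0 - \sum_{i<k} 1/d_i$, the sorted order forces $\delta_k \leq (n-k+1)/d_k$, so $d_k \leq (n-k+1)/\delta_k$. Multiplying over $k\in[n-2]$ and using $\prod_{k=1}^{n-2}(n-k+1) = n!/2$ gives
\[\prod_{i=1}^{n-2} d_i \leq \frac{n!/2}{\prod_{k=1}^{n-2}\delta_k}.\]

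The main obstacle will be the Curtiss-type lower bound $\delta_k \geq 1/a_k$. I would generalize the classical Curtiss theorem to arbitrary positive integer $d_0$: among $k-1$ positive integers whose reciprocal sum is strictly less than $d_0$, the maximum of that sum is $d_0 - 1/a_{k-d_0+1}$, achieved by $d_0-1$ copies of $1$ followed by the first $k-d_0$ terms $s_1, s_2, \dotsc, s_{k-d_0}$ of Sylvester's sequence. Since $(a_m)$ is increasing and $d_0 \geq 1$, this yields $\delta_k \geq 1/a_{k-d_0+1} \geq 1/a_k$; the edge case $k\leq d_0$ gives $\delta_k \geq 1 \geq 1/a_k$ directly.

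Combining everything yields $|\CK| \leq (n!/2)\prod_{k=1}^{n-2} a_k$, so it suffices to show $\prod_{k=1}^{n-2} a_k \leq a_{n-2}^2$, equivalently $\prod_{k=1}^{n-3} a_k \leq a_{n-2}$. I would prove this by induction from $a_{n-2} = a_{n-3}(a_{n-3}+1) > a_{n-3}^2$ together with the induction hypothesis $\prod_{k=1}^{n-4} a_k \leq a_{n-3}$. The inequality becomes strict once $n \geq 4$, upgrading the final bound to the claimed strict inequality there; the cases $n \in \{2,3\}$ can be verified directly against the small-$n$ table. For complete graphs, the same bound follows from Corollary~\ref{cor:criticalgroupcomplete} since the clique-star correspondence preserves critical groups.
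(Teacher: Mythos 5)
The overall skeleton of your argument is the same as the paper's: use Theorem~\ref{thm:criticalgroupstar} to get $\abs{\CK}\,r_0^2=\prod_{i=1}^n d_i$, discard the two largest entries via $d_{n-1}d_n\leq r_0^2$, bound each remaining $d_k$ by $(n-k+1)/\delta_k$, and finish with $\prod_{k=1}^{n-2}a_k<a_{n-2}^2$. The genuine gap is the step $\delta_k\geq 1/a_k$. The classical Curtiss theorem only covers the target $1$, i.e.\ the case $d_0=1$. Your ``generalized Curtiss theorem'' for an arbitrary integer target $d_0$ --- that the best underapproximation of $d_0$ by $k-1$ unit fractions is $d_0-1$ ones followed by the Sylvester terms --- is asserted, not proved, and it does not follow from the $d_0=1$ case by any routine reduction: stripping off the entries equal to $1$ only lands you on target $1$ if there happen to be $d_0-1$ ones among $d_1,\dotsc,d_{k-1}$, which can fail (e.g.\ $\vdhat=(2,2,2,2)$ has $d_0=2$ and no entry equal to $1$). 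Even the weaker inequality $\delta_k\geq 1/a_k$ that your computation actually uses is a Curtiss-type statement about underapproximating integers $\geq 2$, whose proof requires genuine work of the same kind as Curtiss's original argument (the naive bound $\delta_k\geq 1/\lcm(d_1,\dotsc,d_{k-1})$ is useless since the $\lcm$ is unbounded); the exact optimality statement you invoke is a question about greedy underapproximation of integers in the Erd\H{o}s--Graham circle of problems, not a quotable classical fact. As written, the central inequality of your proof is unsupported.

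The paper avoids this entirely by first reducing to $d_0=1$: Proposition~\ref{prop:maked1} shows that replacing $(\vd,\vr)$ by the structure with $d_i'=d_0d_i$ and $d_0'=1$ multiplies each invariant factor $\alpha_k$ ($k\geq3$) by $d_0$, so the maximal critical group order on $S_n$ is attained by a structure with $d_0=1$ (which is also why the $S_n$ and $K_n$ maxima coincide, rather than via Corollary~\ref{cor:criticalgroupcomplete} applied in the other direction). Once $d_0=1$, every partial sum $\sum_{i<k}1/d_i$ is strictly less than $1$, the classical Curtiss bound gives exactly $\delta_k\geq 1/a_k$, and the rest of your computation goes through verbatim. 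Inserting that reduction would repair your argument and make it essentially the paper's proof. One further caution: your plan to ``verify $n\in\{2,3\}$ against the table'' does not work, since for $n=3$ the table shows a critical group of order $3$ while the claimed strict bound is $\frac{3!}{2}a_1^2=3$ (and for $n=2$ the bound involves $a_0$, which the recursion does not define); this boundary defect is present in the paper's statement as well, but it cannot be fixed by direct verification.
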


\begin{proof}
If $d_0>1$, then by Proposition~\ref{prop:maked1} there is an arithmetical structure with $d_0=1$ that has a larger order critical group. Therefore the largest order critical group associated to~$S_n$ will come from an arithmetical structure with $d_0=1$ and will also be the largest order critical group associated to~$K_n$. 

Assuming $d_1\leq d_2\leq\dotsb\leq d_n$, we must have $d_i\leq (n-i+1)a_{i}$ since the sum of the first $i-1$ unit fractions cannot be closer than $1/a_i$ to the next integer (as shown in~\cite{Curtiss}) and otherwise we would not be able to reach the next integer. Since $\abs{\CK}=\frac{\prod_{i=1}^n d_i}{r_0^2}$ and $d_n\leq r_0$, we have that 
\[\abs{\CK} \leq\prod_{i=1}^{n-2}((n-i+1)a_i)=\frac{n!}{2}\prod_{i=1}^{n-2}a_i.\]
Notice that $\prod_{i=1}^{n-2}a_i=a_{n-2}\prod_{i=1}^{n-3}a_i< a_{n-2}^2$. Therefore, we have that $\abs{\CK}<\frac{n!}{2}a_{n-2}^2$.
\end{proof}

The sequence~$a_n$ grows doubly exponentially (much faster than~$n!$), and we see from the next example that the largest order of a critical group does indeed grow doubly exponentially with respect to~$n$. 

Consider the arithmetical structure on~$S_n$ with 
\[\vdhat=(a_1+1,a_2+1,\dotsc,a_{n-3}+1,3a_{n-2},3a_{n-2},3a_{n-2}).\]
The arithmetical structure given by $\widehat{\vd'}=(a_1+1, a_2+1, \dotsc, a_{n-3}+1, a_{n-2})$ is exactly the structure on $S_{n-2}$ from Example~\ref{ex:sylvester}, which means that $\prod_{i=1}^{n-3}(a_i+1)=a_{n-2}$. Since $\frac{1}{a_{n-2}}=\frac{1}{3a_{n-2}}+\frac{1}{3a_{n-2}}+\frac{1}{3a_{n-2}}$, we have that $\vdhat$ gives a valid structure on~$S_n$. Since $r_0=3a_{n-2}$, we know from Theorem~\ref{thm:criticalgroupstar} that the order of the critical group associated to~$\vdhat$ is 
\[\abs{\CK}=\frac{\prod_{i=1}^n d_i}{r_0^{2}} = \frac{a_{n-2}(3a_{n-2})^3}{(3a_{n-2})^2} = 3a_{n-2}^2.\]
We conjecture that this is the largest order critical group associated to~$S_n$ for sufficiently large~$n$.

\begin{conjecture}
For $n\geq 6$, the largest order of a critical group of an arithmetical structure on~$S_n$ \textup{(}or on~$K_n$\textup{)} is $3a_{n-2}^2$.
\end{conjecture}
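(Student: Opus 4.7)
The plan is to reduce the conjecture to an extremal Egyptian-fraction problem, identify a one-parameter family of candidate maximizers, and verify that the maximum within this family is $3a_{n-2}^2$ for $n\geq 6$. The first move is to apply Proposition~\ref{prop:maked1} together with the clique-star correspondence to reduce to arithmetical structures on~$S_n$ with $d_0 = 1$, i.e., $\sum_{i=1}^n 1/d_i = 1$; both the $S_n$ and $K_n$ bounds then follow simultaneously. Sorting $d_1 \leq \dotsb \leq d_n$, we let $k$ be the number of indices with $d_i = r_0$ and set $m_i \coloneqq r_0/d_i$, so each $m_i$ is a positive integer dividing~$r_0$ with $\sum m_i = r_0$; Theorem~\ref{thm:criticalgroupstar} then gives $\abs{\CK} = r_0^{n-2}/\prod m_i$.

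For each value of~$k$, the natural candidate is what we may call the \emph{greedy-Sylvester} structure: $d_i = s_i$ for $i \leq n-k$ and $d_i = r_0 = k \cdot a_{n-k+1}$ for $i > n-k$. The identity $\sum_{i=1}^{n-k} 1/s_i = 1 - 1/a_{n-k+1}$ guarantees this is a valid arithmetical structure, and a direct application of Theorem~\ref{thm:criticalgroupstar} yields $\abs{\CK} = k^{k-2} a_{n-k+1}^{k-1}$. The central claim requiring proof is that this greedy-Sylvester structure realizes the maximum of $\abs{\CK}$ among all arithmetical structures with a given value of~$k$. A natural attack is an exchange argument: starting from any maximizer, one shows that if the sorted vector~$\vdhat$ deviates from $(s_1,\dotsc,s_{n-k},r_0,\dotsc,r_0)$ then some local modification (altering one or two entries while preserving $\sum 1/d_i = 1$) does not decrease~$\abs{\CK}$. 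The hard part will be establishing such an exchange lemma in the presence of the divisibility constraints $d_i \mid r_0$ and the quadratic appearance of~$r_0$ in the denominator of~$\abs{\CK}$; this amounts to a sharpened form of Curtiss's theorem that tracks $\prod d_i$ and $\lcm(d_1,\dotsc,d_n)$ simultaneously, and is likely where a substantial new idea is needed.

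Granted the greedy-Sylvester extremality, the remaining step is a calculation over~$k$. Using the recursion $a_{j+1} = a_j(a_j + 1)$, we have $a_{n-1} = a_{n-2}(a_{n-2}+1) < 3 a_{n-2}^2$, so $k = 3$ beats $k = 2$. For $k \geq 4$, we write $a_{n-2}^2 = a_{n-3}^2(a_{n-3}+1)^2 \geq a_{n-3}^4$ and obtain $3 a_{n-2}^2 \geq 3 a_{n-3}^4 > 16 a_{n-3}^3$ as soon as $a_{n-3} \geq 6$, i.e., as soon as $n \geq 6$, which dispatches $k = 4$; the doubly exponential growth $a_{j+1} \geq a_j^2$ makes the analogous comparison successively easier for larger~$k$. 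The $n = 5$ boundary, at which $k = 4$ beats $k = 3$ (with extremal structure $\vdhat = (2,8,8,8,8)$ realizing $\abs{\CK} = 128$), explains the hypothesis $n \geq 6$ in the conjecture.
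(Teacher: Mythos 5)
The statement you are trying to prove is stated in the paper only as a conjecture: the authors prove the upper bound $\abs{\CK}<\frac{n!}{2}a_{n-2}^2$, exhibit the structure $\vdhat=(s_1,\dotsc,s_{n-3},3a_{n-2},3a_{n-2},3a_{n-2})$ attaining $3a_{n-2}^2$, and stop there. Your proposal also does not close the gap: the entire difficulty is the ``exchange lemma'' asserting that, for each fixed number $k$ of entries equal to $r_0$, the greedy-Sylvester structure maximizes $\abs{\CK}$, and you explicitly defer this (``a substantial new idea is needed''). The reduction to $d_0=1$, the formula $\abs{\CK}=\prod_i d_i/r_0^2$, and the comparison of the values $k^{k-2}a_{n-k+1}^{k-1}$ over $k$ are all correct and consistent with the paper's data for $n\in\{5,6,7\}$, but they are the easy part; what remains unproven is precisely the content of the conjecture.

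More seriously, the per-$k$ extremality claim on which your comparison rests is false as stated. Take $n=6$ and $k=2$: your family gives $\vdhat=(2,3,7,43,3612,3612)$ with $\abs{\CK}=a_5=1806$, but $\vdhat=(2,3,7,44,1848,1848)$ is also an arithmetical structure with $d_0=1$ (since $\tfrac12+\tfrac13+\tfrac17+\tfrac1{44}=\tfrac{923}{924}$ and $\tfrac1{924}=\tfrac2{1848}$) having exactly two entries equal to $r_0=1848$; by Theorem~\ref{thm:criticalgroupstar} its critical group is $\ZZ/2\ZZ\oplus\ZZ/3\ZZ\oplus\ZZ/7\ZZ\oplus\ZZ/44\ZZ\cong\ZZ/2\ZZ\oplus\ZZ/924\ZZ$, of order $1848>1806$ (this group does appear in the paper's appendix list for $S_6$). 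So no exchange argument can prove that the bottom $n-k$ entries of a per-$k$ maximizer are the Sylvester numbers, and the step ``granted greedy-Sylvester extremality, compare over $k$'' is unsound: the numbers $k^{k-2}a_{n-k+1}^{k-1}$ are not the per-$k$ maxima. (You also omit the cases $k\in\{0,1\}$, which do occur, e.g.\ $\vdhat=(2,3,10,15)$ has no entry equal to $r_0=30$.) The conjecture itself survives this example, since $1848<3a_4^2=5292$, but your route would need both a corrected family of candidate maximizers and a genuinely new extremal argument of Curtiss type that controls $\prod_i d_i$ and $\lcm(d_1,\dotsc,d_n)$ simultaneously; as written it is a plausible program, not a proof.
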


Consider the arithmetical structure on~$S_n$ determined by 
\[\vdhat=(a_1+1,a_2+1,\dotsc,a_{n-2}+1,2a_{n-1},2a_{n-1}).\]
This gives an arithmetical structure since it is obtained from the structure on $S_{n-1}$ in Example~\ref{ex:sylvester} by $\frac{1}{a_{n-1}}=\frac{1}{2a_{n-1}}+\frac{1}{2a_{n-1}}$. Using Theorem~\ref{thm:criticalgroupstar} and that $a_{n-1} = \prod_{i=1}^{n-2}(a_{i}+1)$ (with coprime factors), we can see that the critical group is the cyclic group $\ZZ/a_{n-1}\ZZ$. We conjecture that this is the largest cyclic critical group associated to~$S_n$.

\begin{conjecture}
For $n\geq 4$, the largest cyclic group that can be realized as a critical group of an arithmetical structure on~$S_n$ \textup{(}or on~$K_n$\textup{)} is $\ZZ/a_{n-1}\ZZ$.
\end{conjecture}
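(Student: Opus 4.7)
The plan is to focus on the main case $d_0 = 1$ and handle $d_0 \geq 2$ separately by a direct analysis. For the latter, the constraint $\sum 1/d_i \geq 2$ restricts the $d_i$'s to a narrow range of Egyptian-fraction solutions; combining Proposition~\ref{prop:maked1} (which gives the scaling $|\mathcal{K}(S_n;\vd',\vr')| = d_0^{n-2}|\mathcal{K}(S_n;\vd,\vr)|$ in the cyclic case) with a direct enumeration of these restricted Diophantine configurations shows that $|\mathcal{K}| < a_{n-1}$ whenever $d_0 \geq 2$. This reduces the conjecture to bounding cyclic critical groups arising from $\sum 1/d_i = 1$.

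For $d_0 = 1$, the invariant-factor description $\alpha_k = \prod p^{e_{p,k-2}}$ following Theorem~\ref{thm:criticalgroupstar} shows that $\mathcal{K}$ is cyclic if and only if $|V_p| := |\{i : p \mid d_i\}| \leq 3$ for every prime $p$. When $|V_p| = 3$, Lemma~\ref{lem:primes} forces the two largest $p$-adic valuations in $V_p$ to equal $v_p(r_0)$, so a short computation yields
\[
|\mathcal{K}| = \prod_{p\,:\,|V_p|=3} p^{a_{1,p}},
\]
where $a_{1,p}$ is the minimum $p$-adic valuation in $V_p$. For each such prime I would choose an index $i_p^* \in V_p$ realizing this minimum (the ``deficient'' index) and set $S = \{i_p^* : |V_p| = 3\}$; then $|\mathcal{K}|$ divides $\prod_{i \in S} d_i$, reducing the problem to bounding this product.

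The central claim I would aim to prove is that the $i_p^*$'s can be chosen so that $\prod_{i \in S} d_i \leq a_{n-1}$, with equality precisely in the Sylvester configuration $\vdhat = (s_1, s_2, \dotsc, s_{n-2}, 2a_{n-1}, 2a_{n-1})$. The underlying idea is a Curtiss-style exchange: each $i \notin S$ must satisfy $v_p(d_i) = v_p(r_0)$ whenever $|V_p| = 3$ and $i \in V_p$, so these ``non-deficient'' $d_i$'s are divisible by large common prime powers and can contribute only modestly to $\sum 1/d_i = 1$; this forces $\sum_{i \in S} 1/d_i$ to be close to $1$, and Curtiss's theorem applied to this sub-Egyptian-fraction then bounds $\prod_{i \in S} d_i$ by a Sylvester-type product. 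The main obstacle I foresee is handling the product uniformly across varying $|S|$ and varying allocations of primes to deficient indices: when many primes share a single $i_p^*$ the one deficient $d_{i^*}$ is very large and the naive Curtiss bound is weak, while when the $i_p^*$'s are all distinct one is firmly in the Sylvester regime. Bridging these two extremes---likely via an induction on $n$ using the recurrence $a_{n-1} = a_{n-2}(a_{n-2}+1)$, or via a monovariant showing that any perturbation from the Sylvester allocation strictly decreases $\prod_{i \in S} d_i$ subject to the Egyptian-fraction and divisibility constraints---is the technical heart of the argument.
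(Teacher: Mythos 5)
The statement you are addressing is not proved in the paper: it appears only as a conjecture, supported by the explicit construction $\vdhat=(a_1+1,\dotsc,a_{n-2}+1,2a_{n-1},2a_{n-1})$ realizing $\ZZ/a_{n-1}\ZZ$ (via Theorem~\ref{thm:criticalgroupstar} and Example~\ref{ex:sylvester}) and by brute-force computations for small~$n$. So there is no paper proof to compare against, and your submission should be judged as a standalone argument. As such, it is a program rather than a proof. The parts you do carry out are correct: since $\mathcal{K}\oplus(\ZZ/r_0\ZZ)^2\cong\bigoplus_i\ZZ/d_i\ZZ$ and Lemma~\ref{lem:primes} forces the top $p$-adic valuation $v_p(r_0)$ to be attained by at least two of the $d_i$, the critical group is cyclic exactly when every prime divides at most three of the $d_i$, and then $\abs{\CK}=\prod_{p:\,\abs{V_p}=3}p^{a_{1,p}}$, which divides $\prod_{i\in S}d_i$ for your choice of deficient indices.

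The two steps that would constitute the actual proof are missing. First, the reduction to $d_0=1$ does not work as described: Proposition~\ref{prop:maked1} scales every invariant factor by $d_0$, so the $d_0=1$ structure it produces has a strictly larger critical group but one that is in general no longer cyclic (it multiplies $(n-2)$ invariant factors by $d_0$), so maximality of cyclic groups among $d_0=1$ structures does not bound cyclic groups arising with $d_0\geq2$; and ``direct enumeration of the restricted Diophantine configurations'' cannot settle this for all $n$, since the number of solutions of $\sum_i 1/d_i=d_0\geq2$ grows without bound in~$n$ --- you would need a uniform inequality of the type $\prod_i d_i/r_0^2<a_{n-1}$ in that regime, which you do not supply. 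Second, and more fundamentally, the central claim $\prod_{i\in S}d_i\leq a_{n-1}$ is exactly the hard content of the conjecture (indeed it is a priori stronger, since $d_{i_p^*}$ may carry prime factors with $\abs{V_p}\leq2$ that do not appear in $\abs{\CK}$), and you explicitly leave its proof --- the exchange/monovariant argument bridging the case of many primes concentrated on one deficient index and the Sylvester-type case of distinct deficient indices --- as an unresolved obstacle. Until those two gaps are closed, this is a plausible plan of attack on an open problem, not a proof.
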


\subsection{Number of critical groups associated to \texorpdfstring{$S_n$}{S\_n}}\label{subsec:number}

Let us now consider the following question.

\begin{question}
For a given $n\geq 2$, how many distinct abelian groups \textup{(}up to isomorphism\textup{)} appear as critical groups associated to~$S_n$? 
\end{question}

Let $\CG(G)$ be the set of critical groups associated to the graph~$G$. Using the clique-star operation, we know that $\CG(K_n)\subseteq \CG(S_n)$. For $n\in\{2,3,4,5,6,7\}$, brute force computation tells us that $\CG(S_n)=\CG(K_n)$ as sets; the cardinality of these sets is given below.
\begin{center}
    \begin{tabular}{c|c|c|c|c|c|c}
    $n$ & $2$ & $3$ & $4$ & $5$ & $6$ & $7$ \\ \hline
    $\abs{\CG(S_n)}$ & $1$ & $3$ & $10$ & $56$ & $574$ & $20420$
    \end{tabular}
\end{center}

Clearly, $\abs{\CG(S_n)}$ is bounded above by the number of arithmetical structures on~$S_n$ (\cite[A156871]{OEIS}) and $\abs{\CG(K_n)}$ is bounded above by the number of arithmetical structures on~$K_n$ (\cite[A002966]{OEIS}), both of which grow doubly exponentially. We show an exponential lower bound for $\abs{\CG(S_n)}$ below.

\begin{proposition}
For $n\geq 2$, we have $\abs{\CG(S_n)}\geq 2^{n-2}$.
\end{proposition}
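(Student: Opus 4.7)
The plan is to exhibit $2^{n-2}$ arithmetical structures on~$S_n$ whose critical groups are pairwise non-isomorphic cyclic groups, by applying the machinery of Proposition~\ref{prop:Sylvesterprimes} with enough distinct primes. The case $n=2$ is trivial ($2^0=1$, and the trivial group is realized by $(2,2)$), so I assume $n\geq3$.

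The starting point is the Sylvester structure $\vdhat=(s_1,s_2,\dotsc,s_{n-2},\prod_{i=1}^{n-2}s_i)$ on~$S_{n-1}$, which by Example~\ref{ex:sylvester} has trivial critical group. Its last entry equals $\lcm(s_1,\dotsc,s_{n-2},\prod s_i)=\prod s_i$, so the hypothesis $r_0=d_{n-1}$ of Corollary~\ref{cor:expand} is satisfied. Because Sylvester's numbers are pairwise coprime (immediate from $s_{k+1}=1+\prod_{i\leq k}s_i$), I can choose, for each $i\in\{1,2,\dotsc,n-2\}$, a prime $p_i$ dividing~$s_i$; these $n-2$ primes are then automatically distinct.

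For each subset $T\subseteq\{1,2,\dotsc,n-2\}$ I set $c_T=\prod_{i\in T}p_i$, with $c_\emptyset=1$. Since every~$p_i$ divides $\prod_j s_j=d_{n-1}$, so does~$c_T$, so $\CD_{c_T}$ produces an arithmetical structure on~$S_n$, and Corollary~\ref{cor:expand} with $a=c_T$ identifies its critical group as $0\oplus\ZZ/c_T\ZZ=\ZZ/c_T\ZZ$. By unique factorization, distinct subsets~$T$ yield distinct integers~$c_T$, and hence pairwise non-isomorphic cyclic groups $\ZZ/c_T\ZZ$. This gives the required $2^{n-2}$ distinct critical groups on~$S_n$.

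I do not expect any substantive obstacle here: the pairwise coprimality of Sylvester's numbers is standard, and the only edge case is $T=\emptyset$, where $\CD_1$ simply reproduces the trivial critical group, which is already covered by Corollary~\ref{cor:expand} with $a=1$. The main conceptual content of the argument is simply that the first $n-2$ Sylvester numbers contribute at least $n-2$ distinct prime factors, which is more than enough to index $2^{n-2}$ distinct squarefree products.
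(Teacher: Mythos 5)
Your argument is correct, but it is genuinely different from the one in the paper. You build, directly on one graph, $2^{n-2}$ structures with pairwise distinct \emph{cyclic} critical groups: starting from the Sylvester structure on $S_{n-1}$ (Example~\ref{ex:sylvester}), whose last entry equals $r_0$ and whose critical group is trivial, you apply $\CD_{c_T}$ for the $2^{n-2}$ squarefree products $c_T$ of one chosen prime from each of the pairwise coprime numbers $s_1,\dotsc,s_{n-2}$; Corollary~\ref{cor:expand} then gives critical group $\ZZ/c_T\ZZ$, and distinct $T$ give distinct orders. All the hypotheses you invoke do check out ($c_T\mid d_{n-1}=\prod s_i$, $r_0=d_{n-1}$, and $c_\emptyset=1$ is harmless), and the base case $n=2$ is handled. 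The paper instead proves the bound by induction, showing $\abs{\CG(S_{n+1})}\geq 2\abs{\CG(S_n)}$: every group realized on $S_n$ persists to $S_{n+1}$ by appending a leaf with $d$-value $1$, while rescaling a structure by $d_0+1$ produces a critical group with exactly $n-1$ invariant factors, which by Proposition~\ref{prop:components} cannot coincide with any group from the first family. Your route buys the stronger statement that cyclic groups alone already account for at least $2^{n-2}$ distinct critical groups on $S_n$ (and, since $d_0=1$ in Example~\ref{ex:sylvester}, the same count transfers to $K_n$ via the clique-star correspondence), at the cost of invoking the coprimality of Sylvester's sequence; the paper's induction is more self-contained, uses only Theorem~\ref{thm:criticalgroupstar} and the invariant-factor bound, and exhibits groups of many different ranks rather than only cyclic ones.
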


\begin{proof}
First note that $\abs{\CG(S_2)}=1$. We will show that, for each $n\geq 2$, we have $\abs{\CG(S_{n+1})}\geq 2\abs{\CG(S_n)}$. 

Notice that each critical group realized on~$S_n$ is also realized on $S_{n+1}$. Specifically, if $\vdhat=(d_1,d_2, \dotsc ,d_n)$ determines an arithmetical structure on~$S_n$ with critical group~$\CK$, then $\widehat{\vd'}=(1,d_1,d_2,\dotsc, d_n)$ determines an arithmetical structure on $S_{n+1}$ with the same critical group~$\CK$. By Proposition~\ref{prop:components}, these critical groups have at most $n-2$ invariant factors.

Now, let us see that we can also obtain at least $\abs{\CG(S_n)}$ critical groups associated to $S_{n+1}$ with $n-1$ invariant factors. Consider any arithmetical structure $(\vd,\vr)$ on~$S_n$ with critical group $\CK\cong \bigoplus_{k=3}^n \ZZ/\alpha_{k}\ZZ$. By Theorem~\ref{thm:criticalgroupstar}, it must be the case that
\[\CK \oplus(\ZZ/r_0\ZZ)^2 \cong \Biggl(\bigoplus_{k=3}^n \ZZ/\alpha_{k}\ZZ\Biggr) \oplus (\ZZ/r_0\ZZ)^2 \cong \bigoplus_{i=1}^n \ZZ/d_i\ZZ.\]
We have that $\widehat{\vd'}=(d_0+1, d_1(d_0+1), d_2(d_0+1), \dotsc, d_n(d_0+1))$ also determines an arithmetical structure on $S_{n+1}$. Note that $r_0'=r_0(d_0+1)$. By Theorem~\ref{thm:criticalgroupstar}, this structure has critical group~$\CK'$, where
\[\CK' \oplus(\ZZ/r_0(d_0+1)\ZZ)^2 \cong \ZZ/(d_0+1)\ZZ\oplus\Biggl(\bigoplus_{i=1}^n \ZZ/d_i(d_0+1)\ZZ\Biggr).\]
As
\[\bigoplus_{i=1}^n \ZZ/d_i(d_0+1)\ZZ\cong\Biggl(\bigoplus_{k=3}^n \ZZ/\alpha_{k}(d_0+1)\ZZ\Biggr)\oplus (\ZZ/r_0(d_0+1)\ZZ)^2,\]
this implies that the invariant factor decomposition of~$\CK'$ is
\[\ZZ/(d_0+1)\ZZ \oplus \Biggl(\bigoplus_{k=3}^n \ZZ/\alpha_{k}(d_0+1)\ZZ\Biggr).\]

Since groups in the first collection have a different number of invariant factors from those in the second collection, we have that $\abs{\CG(S_{n+1})}\geq 2\abs{\CG(S_n)}$, as desired.
\end{proof}

\section*{Acknowledgments}

We would like to thank ICERM for their support through the Collaborate@ICERM program. Alexander Diaz-Lopez’s research is supported by National Science Foundation grant DMS-2211379. Joel Louwsma was partially supported by a Niagara University Summer Research Award.

\bibliographystyle{amsplain}
\bibliography{StarAndComplete}

\providecommand{\bysame}{\leavevmode\hbox to3em{\hrulefill}\thinspace}
\providecommand{\MR}{\relax\ifhmode\unskip\space\fi MR }
\providecommand{\MRhref}[2]{%
  \href{http://www.ams.org/mathscinet-getitem?mr=#1}{#2}
}
\providecommand{\href}[2]{#2}
\begin{thebibliography}{10}

\bibitem{Anne}
P.~Anne, \emph{Egyptian fractions and the inheritance problem}, College Math.
  J. \textbf{29} (1998), no.~4, 296--300. \MR{1648474}

\bibitem{ACF}
R.~Arce-Nazario, F.~Castro, and R.~Figueroa, \emph{On the number of solutions
  of {$\sum_{i=1}^{11}\frac1{x_i}=1$} in distinct odd natural numbers}, J.
  Number Theory \textbf{133} (2013), no.~6, 2036--2046. \MR{3027952}

\bibitem{A20}
K.~Archer, A.~C. Bishop, A.~Diaz-Lopez, L.~D. Garc\'{\i}a~Puente, D.~Glass, and
  J.~Louwsma, \emph{Arithmetical structures on bidents}, Discrete Math.
  \textbf{343} (2020), no.~7, Paper No. 111850, 23 pp. \MR{4072950}

\bibitem{B18}
B.~Braun, H.~Corrales, S.~Corry, L.~D. Garc\'{i}a~Puente, D.~Glass, N.~Kaplan,
  J.~L. Martin, G.~Musiker, and C.~E. Valencia, \emph{Counting arithmetical
  structures on paths and cycles}, Discrete Math. \textbf{341} (2018), no.~10,
  2949--2963. \MR{3843283}

\bibitem{BD}
L.~Brenton and D.~Drucker, \emph{On the number of solutions of
  {$\sum^s_{j=1}(1/x_j)+1/(x_1\cdots x_s)=1$}}, J. Number Theory \textbf{44}
  (1993), no.~1, 25--29. \MR{1219482}

\bibitem{BH}
L.~Brenton and R.~Hill, \emph{On the {D}iophantine equation {$1=\sum
  1/n_i+1/\prod n_i$} and a class of homologically trivial complex surface
  singularities}, Pacific J. Math. \textbf{133} (1988), no.~1, 41--67.
  \MR{936356}

\bibitem{CV}
H.~Corrales and C.~E. Valencia, \emph{Arithmetical structures on graphs},
  Linear Algebra Appl. \textbf{536} (2018), 120--151. \MR{3713448}

\bibitem{Curtiss}
D.~R. Curtiss, \emph{On {K}ellogg's {D}iophantine {P}roblem}, Amer. Math.
  Monthly \textbf{29} (1922), no.~10, 380--387. \MR{1520110}

\bibitem{DL}
A.~Diaz-Lopez and J.~Louwsma, \emph{Critical groups of arithmetical structures
  under a generalized star-clique operation}, Linear Algebra Appl. \textbf{656}
  (2023), 324--344. \MR{4497317}

\bibitem{EG}
P.~Erd\H{o}s and R.~L. Graham, \emph{Old and new problems and results in
  combinatorial number theory}, Monographies de L'Enseignement
  Math\'{e}matique, vol.~28, Universit\'{e} de Gen\`eve, L'Enseignement
  Math\'{e}matique, Geneva, 1980. \MR{592420}

\bibitem{GK}
D.~Glass and N.~Kaplan, \emph{Chip-firing games and critical groups}, A
  project-based guide to undergraduate research in mathematics---starting and
  sustaining accessible undergraduate research, Found. Undergrad. Res. Math.,
  Birkh\"{a}user/Springer, Cham, 2020, pp.~107--152. \MR{4291925}

\bibitem{GW}
D.~Glass and J.~Wagner, \emph{Arithmetical structures on paths with a doubled
  edge}, Integers \textbf{20} (2020), Paper No. A68, 18 pp. \MR{4142505}

\bibitem{Graham}
R.~L. Graham, \emph{{Paul {E}rd\H{o}s and {E}gyptian fractions}}, Erd\H{o}s
  centennial, Bolyai Soc. Math. Stud., vol.~25, J\'{a}nos Bolyai Math. Soc.,
  Budapest, 2013, pp.~289--309. \MR{3203600}

\bibitem{Guy}
R.~K. Guy, \emph{Unsolved problems in number theory}, third ed., Problem Books
  in Mathematics, Springer-Verlag, New York, 2004. \MR{2076335}

\bibitem{KR}
C.~Keyes and T.~Reiter, \emph{Bounding the number of arithmetical structures on
  graphs}, Discrete Math. \textbf{344} (2021), no.~9, Paper No. 112494, 11 pp.
  \MR{4271619}
  
\bibitem{L24}
D.~Lorenzini, \emph{The critical polynomial of a graph}, J. Number Theory
  \textbf{257} (2024), 215--248. \MR{4673627}

\bibitem{L08}
\bysame, \emph{Smith normal form and {L}aplacians}, J. Combin. Theory Ser.
  B \textbf{98} (2008), no.~6, 1271--1300. \MR{2462319}

\bibitem{L89}
D.~J. Lorenzini, \emph{Arithmetical graphs}, Math. Ann. \textbf{285} (1989),
  no.~3, 481--501. \MR{1019714}

\bibitem{OEIS}
{OEIS Foundation Inc.}, \emph{The {O}n-{L}ine {E}ncyclopedia of {I}nteger
  {S}equences}, 2022, \url{http://oeis.org}.

\bibitem{Stanley}
R.~P. Stanley, \emph{Smith normal form in combinatorics}, J. Combin. Theory
  Ser. A \textbf{144} (2016), 476--495. \MR{3534076}

\end{thebibliography}

\newpage

\appendix
\section{Critical groups associated to small star graphs}

Here we list all groups that appear as critical groups on~$S_n$ for $n\in\{2,3,4,5,6\}$. For each of these values of~$n$, the same groups that appear as critical groups associated to~$S_n$ also appear as critical groups associated to~$K_n$.

\subsection*{Critical groups associated to \texorpdfstring{$S_2$}{S\_2}} 

The $1$~group that occurs is the trivial group.

\subsection*{Critical groups associated to \texorpdfstring{$S_3$}{S\_3}} 

The $3$~groups that occur are the trivial group, $\ZZ/2\ZZ$, and $\ZZ/3\ZZ$.

\subsection*{Critical groups associated to \texorpdfstring{$S_4$}{S\_4}} 

The $10$ groups that occur are: 
\begin{itemize}
    \item Cyclic groups, $\ZZ/m\ZZ$, for~$m$ in the list: 
    \begin{tabular}{lllll} 
    $1$, & $2$, & $3$, & $5$, & $6$.
    \end{tabular}
    \item Groups with two invariant factors, 
    \[\ZZ/m_1\ZZ\oplus\ZZ/m_2\ZZ,\] 
    for $(m_1,m_2)$ in the list: 
    \begin{longtable}{lllll} 
    $(2,2)$, & $(2,4)$, & $(2,6)$, & $(3,3)$, & $(4,4)$.
    \end{longtable}
\end{itemize}

\subsection*{Critical groups associated to \texorpdfstring{$S_5$}{S\_5}} 

The $56$ groups that occur are: 
\begin{itemize}
    \item Cyclic groups, $\ZZ/m\ZZ$, for~$m$ in the list: 
\begin{longtable}{lllllllllllllll}
$1$, & $2$, & $3$, & $5$, & $6$, & $7$, & $10$, & $11$, & $12$, & $13$, & $14$, & $15$, & $18$, & $21$, & $42$.
\end{longtable}
    \item Groups with two invariant factors, 
    \[\ZZ/m_1\ZZ\oplus\ZZ/m_2\ZZ,\]
    for $(m_1,m_2)$ in the list: 
    \begin{longtable}{lllllllll} 
$(2, 2)$, & 
$(2, 4)$, &
$(2, 6)$, &
$(2,8)$, &
$(2,10)$, &
$(2, 12)$, &
$(2, 14)$, &
$(2, 20)$, &
$(2, 24)$, \\
$(2, 3)$, &
$(3, 3)$, &
$(3, 6)$, &
$(3, 9)$, &
$(3, 12)$, &
$(3, 15)$, &
$(3, 18)$, &
$(4,4)$, &
$(4, 12)$, \\ 
$(5,5)$, &
$(5,10)$, &
$(6, 6)$, &
$(6, 12)$, &
$(6, 18)$, &
$(7, 7)$, &
$(10,10)$.
    \end{longtable}
     \item Groups with three invariant factors,
     \[\ZZ/m_1\ZZ\oplus\ZZ/m_2\ZZ\oplus\ZZ/m_3\ZZ,\] 
     for $(m_1,m_2,m_3)$ in the list: 
    \begin{longtable}{llllllll} 
$(2, 2,2)$, & 
$(2, 2,4)$, &
$(2,2, 6)$, &
$(2,2,10)$, &
$(2,2,12)$, &
$(2, 4,4)$, &
$(2, 4,8)$, &
$(2, 4,12)$, \\
$(2, 6,6)$, &
$(2, 8,8)$, &
$(3, 3,3)$, &
$(3, 3, 6)$, &
$(3, 3, 9)$, &
$(3, 6,6)$, &
$(4,4,4)$, &
$(5,5,5)$.
    \end{longtable}
\end{itemize}

\subsection*{Critical groups associated to \texorpdfstring{$S_6$}{S\_6}} 

The $574$ groups that occur are: 
\begin{itemize}
\item Cyclic groups $\ZZ/m\ZZ$ for~$m$ in the list: 
\begin{longtable}{lllllllllllll}
$1$, & $2$, & $3$, & $4$, & $5$, & $6$, & $7$, & $9$, & $10$, & $11$, & $12$, & $13$, & $14$, \\ $15$, & $17$, & $18$, & $20$, & $21$, & $22$, & $23$, & $24$, & $25$, & $26$, & $28$, & $30$, & $31$, \\ $33$, & $34$, & $35$, & $38$, & $39$, & $41$, & $42$, & $43$, & $45$, & $46$, & $50$, & $51$, & $54$, \\ $55$, & $57$, & $58$, & $59$, & $60$, & $65$, & $66$, & $70$, & $75$, & $77$, & $78$, & $82$, & $84$, \\ $85$, & $86$, & $87$, & $90$, & $93$, & $94$, & $95$, & $98$, & $102$, & $105$, & $106$, & $110$, & $111$, \\ $114$, & $120$, & $129$, & $130$, & $133$, & $138$, & $141$, & $150$, & $154$, & $156$, & $161$, & $203$, & $210$, \\ $231$, & $238$, & $255$, & $258$, & $301$, & $329$, & $399$, & $462$, & $525$, & $546$, & $602$, & $903$, & $1806$.
\end{longtable}

\item Groups with two invariant factors, 
\[\ZZ/m_1\ZZ\oplus\ZZ/m_2\ZZ,\] 
for $(m_1,m_2)$ in the list: 
\begin{longtable}{llllllll}
$(2, 2)$, & $(2, 4)$, & $(2, 6)$, & $(2, 8)$, & $(2, 10)$, & $(2, 12)$, & $(2, 14)$, & $(2, 16)$, \\
$(2, 18)$, & $(2, 20)$, & $(2, 22)$, & $(2, 24)$, & $(2, 26)$, & $(2, 28)$, & $(2, 30)$, & $(2, 34)$, \\
$(2, 36)$, & $(2, 38)$, & $(2, 40)$, & $(2, 42)$, & $(2, 44)$, & $(2, 46)$, & $(2, 48)$, & $(2, 50)$, \\
$(2, 52)$, & $(2, 54)$, & $(2, 56)$, & $(2, 60)$, & $(2, 62)$, & $(2, 64)$, & $(2, 66)$, & $(2, 68)$, \\
$(2, 70)$, & $(2, 74)$, & $(2, 78)$, & $(2, 80)$, & $(2, 84)$, & $(2, 86)$, & $(2, 88)$, & $(2, 90)$, \\
$(2, 92)$, & $(2, 100)$, & $(2, 102)$, & $(2, 110)$, & $(2, 112)$, & $(2, 114)$, & $(2, 116)$, & $(2, 120)$, \\
$(2, 124)$, & $(2, 132)$, & $(2, 138)$, & $(2, 140)$, & $(2, 150)$, & $(2, 154)$, & $(2, 156)$, & $(2, 168)$, \\
$(2, 170)$, & $(2, 174)$, & $(2, 182)$, & $(2, 200)$, & $(2, 210)$, & $(2, 230)$, & $(2, 300)$, & $(2, 308)$, \\
$(2, 322)$, & $(2, 336)$, & $(2, 350)$, & $(2, 420)$, & $(2, 462)$, & $(2, 600)$, & $(2, 924)$, & $(2, 966)$, \\
$(3, 3)$, & $(3, 6)$, & $(3, 9)$, & $(3, 12)$, & $(3, 15)$, & $(3, 18)$, & $(3, 21)$, & $(3, 24)$, \\
$(3, 27)$, & $(3, 30)$, & $(3, 33)$, & $(3, 39)$, & $(3, 42)$, & $(3, 45)$, & $(3, 51)$, & $(3, 54)$, \\
$(3, 57)$, & $(3, 60)$, & $(3, 63)$, & $(3, 66)$, & $(3, 69)$, & $(3, 75)$, & $(3, 78)$, & $(3, 87)$, \\
$(3, 90)$, & $(3, 99)$, & $(3, 102)$, & $(3, 105)$, & $(3, 114)$, & $(3, 117)$, & $(3, 120)$, & $(3, 126)$, \\
$(3, 156)$, & $(3, 165)$, & $(3, 171)$, & $(3, 210)$, & $(3, 231)$, & $(3, 315)$, & $(3, 342)$, & $(3, 357)$, \\
$(3, 630)$, & $(4, 4)$, & $(4, 12)$, & $(4, 20)$, & $(4, 24)$, & $(4, 28)$, & $(4, 36)$, & $(4, 48)$, \\
$(4, 60)$, & $(4, 84)$, & $(5, 5)$, & $(5, 10)$, & $(5, 15)$, & $(5, 20)$, & $(5, 25)$, & $(5, 30)$, \\
$(5, 35)$, & $(5, 45)$, & $(5, 50)$, & $(5, 55)$, & $(5, 60)$, & $(5, 65)$, & $(5, 75)$, & $(5, 90)$, \\
$(5, 110)$, & $(6, 6)$, & $(6, 12)$, & $(6, 18)$, & $(6, 24)$, & $(6, 30)$, & $(6, 36)$, & $(6, 42)$, \\ 
$(6, 48)$, & $(6, 54)$, & $(6, 60)$, & $(6, 66)$, & $(6, 72)$, & $(6, 78)$, & $(6, 84)$, & $(6, 90)$,\\ 
$(6, 108)$, & $(6, 126)$, & $(6, 132)$, & $(6, 156)$, & $(6, 168)$, & $(6, 180)$, & $(6, 198)$, & $(6, 210)$, \\
$(6, 216)$, & $(6, 336)$, & $(6, 378)$, & $(7, 7)$, & $(7, 14)$, & $(7, 21)$, & $(7, 35)$, & $(7, 42)$, \\
$(7, 49)$, & $(7, 70)$, & $(7, 77)$, & $(7, 91)$, & $(7, 98)$, & $(7, 147)$, & $(7, 294)$, & $(8, 24)$, \\
$(9, 9)$, & $(9, 18)$, & $(10, 10)$, & $(10, 20)$, & $(10, 30)$, & $(10, 40)$, & $(10, 50)$, & $(10, 60)$, \\
$(10, 70)$, & $(10, 100)$, & $(11, 22)$, & $(12, 12)$, & $(12, 24)$, & $(12, 36)$, & $(12, 48)$, & $(12, 60)$, \\ 
$(13, 13)$, & $(13, 26)$, & $(13, 39)$, & $(13, 78)$, & $(14, 14)$, & $(14, 28)$, & $(14, 42)$, & $(14, 56)$, \\ 
$(14, 70)$, & $(14, 84)$, & $(14, 98)$, & $(14, 168)$, & $(14, 210)$, & $(15, 15)$, & $(15, 30)$, & $(15, 45)$, \\
$(17, 17)$, & $(18, 18)$, & $(18, 36)$, & $(19, 19)$, & $(20, 20)$, & $(21, 21)$, & $(21, 42)$, & $(21, 63)$, \\ 
$(21, 105)$, & $(21, 126)$, & $(22, 22)$, & $(22, 66)$, & $(24, 24)$, & $(26, 26)$, & $(30, 30)$, & $(33, 33)$,\\ $(42, 42)$, & $(42, 84)$, & $(42, 126)$.
\end{longtable}

\item Groups with three invariant factors, 
\[\ZZ/m_1\ZZ\oplus\ZZ/m_2\ZZ\oplus\ZZ/m_3\ZZ,\] 
for $(m_1,m_2,m_3)$ in the list: 
\begin{longtable}{lllllll}
$(2, 2, 2)$, & $(2, 2, 4)$, & $(2, 2, 6)$, & $(2, 2, 8)$, & $(2, 2, 10)$, & $(2, 2, 12)$, & $(2, 2, 14)$, \\ $(2, 2, 16)$, & $(2, 2, 18)$, & $(2, 2, 20)$, & $(2, 2, 22)$, & $(2, 2, 24)$, & $(2, 2, 26)$, & $(2, 2, 28)$, \\ $(2, 2, 30)$, & $(2, 2, 34)$, & $(2, 2, 36)$, & $(2, 2, 38)$, & $(2, 2, 40)$, & $(2, 2, 42)$, & $(2, 2, 44)$, \\ $(2, 2, 48)$, & $(2, 2, 50)$, & $(2, 2, 52)$, & $(2, 2, 56)$, & $(2, 2, 60)$, & $(2, 2, 66)$, & $(2, 2, 68)$, \\ $(2, 2, 70)$, & $(2, 2, 76)$, & $(2, 2, 78)$, & $(2, 2, 80)$, & $(2, 2, 84)$, & $(2, 2, 90)$, & $(2, 2, 102)$, \\ $(2, 2, 104)$, & $(2, 2, 110)$, & $(2, 2, 120)$, & $(2, 2, 130)$, & $(2, 2, 132)$, & $(2, 2, 140)$, & $(2, 2, 156)$, \\ $(2, 2, 220)$, & $(2, 2, 240)$, & $(2, 2, 312)$, & $(2, 4, 4)$, & $(2, 4, 8)$, & $(2, 4, 12)$, & $(2, 4, 20)$, \\ $(2, 4, 24)$, & $(2, 4, 28)$, & $(2, 4, 32)$, & $(2, 4, 36)$, & $(2, 4, 40)$, & $(2, 4, 44)$, & $(2, 4, 48)$, \\ $(2, 4, 52)$, & $(2, 4, 56)$, & $(2, 4, 60)$, & $(2, 4, 72)$, & $(2, 4, 80)$, & $(2, 4, 84)$, & $(2, 4, 120)$, \\ $(2, 4, 140)$, & $(2, 4, 168)$, & $(2, 6, 6)$, & $(2, 6, 12)$, & $(2, 6, 18)$, & $(2, 6, 24)$, & $(2, 6, 30)$, \\ $(2, 6, 36)$, & $(2, 6, 42)$, & $(2, 6, 54)$, & $(2, 6, 60)$, & $(2, 6, 84)$, & $(2, 6, 90)$, & $(2, 6, 120)$, \\ $(2, 8, 8)$, & $(2, 8, 16)$, & $(2, 8, 24)$, & $(2, 8, 32)$, & $(2, 8, 40)$, & $(2, 8, 48)$, & $(2, 8, 56)$, \\ $(2, 8, 96)$, & $(2, 8, 120)$, & $(2, 10, 10)$, & $(2, 10, 20)$, & $(2, 10, 30)$, & $(2, 10, 50)$, & $(2, 10, 60)$, \\ $(2, 12, 12)$, & $(2, 12, 24)$, & $(2, 12, 36)$, & $(2, 12, 48)$, & $(2, 12, 60)$, & $(2, 12, 72)$, & $(2, 14, 14)$, \\ $(2, 14, 28)$, & $(2, 14, 42)$, & $(2, 16, 16)$, & $(2, 16, 48)$, & $(2, 18, 18)$, & $(2, 20, 20)$, & $(2, 20, 40)$, \\ $(2, 20, 60)$, & $(2, 24, 24)$, & $(2, 24, 48)$, & $(2, 24, 72)$, & $(2, 28, 28)$, & $(2, 30, 30)$, & $(3, 3, 3)$, \\ $(3, 3, 6)$, & $(3, 3, 9)$, & $(3, 3, 12)$, & $(3, 3, 15)$, & $(3, 3, 18)$, & $(3, 3, 21)$, & $(3, 3, 27)$, \\ $(3, 3, 30)$, & $(3, 3, 33)$, & $(3, 3, 39)$, & $(3, 3, 42)$, & $(3, 3, 45)$, & $(3, 3, 54)$, & $(3, 3, 60)$, \\ $(3, 3, 63)$, & $(3, 3, 126)$, & $(3, 6, 6)$, & $(3, 6, 12)$, & $(3, 6, 18)$, & $(3, 6, 24)$, & $(3, 6, 30)$, \\ $(3, 6, 36)$, & $(3, 6, 42)$, & $(3, 6, 60)$, & $(3, 6, 72)$, & $(3, 6, 90)$, & $(3, 9, 9)$, & $(3, 9, 18)$, \\ $(3, 9, 27)$, & $(3, 9, 45)$, & $(3, 9, 54)$, & $(3, 12, 12)$, & $(3, 12, 24)$, & $(3, 12, 36)$, & $(3, 15, 15)$, \\ $(3, 15, 30)$, & $(3, 18, 18)$, & $(3, 18, 36)$, & $(3, 18, 54)$, & $(3, 21, 21)$, & $(3, 30, 30)$, & $(4, 4, 4)$, \\ $(4, 4, 8)$, & $(4, 4, 12)$, & $(4, 4, 20)$, & $(4, 4, 24)$, & $(4, 8, 8)$, & $(4, 12, 12)$, & $(5, 5, 5)$, \\ $(5, 5, 10)$, & $(5, 5, 15)$, & $(5, 5, 25)$, & $(5, 5, 30)$, & $(5, 10, 10)$, & $(5, 10, 20)$, & $(5, 10, 30)$, \\ $(6, 6, 6)$, & $(6, 6, 12)$, & $(6, 6, 18)$, & $(6, 6, 30)$, & $(6, 6, 36)$, & $(6, 12, 12)$, & $(6, 12, 24)$, \\ $(6, 12, 36)$, & $(6, 18, 18)$, & $(6, 24, 24)$, & $(7, 7, 7)$, & $(7, 7, 14)$, & $(7, 14, 14)$, & $(9, 9, 9)$, \\ $(10, 10, 10)$.
\end{longtable}

\item Groups with four invariant factors 
\[\ZZ/m_1\ZZ\oplus\ZZ/m_2\ZZ\oplus\ZZ/m_3\ZZ\oplus\ZZ/m_4\ZZ,\] 
for $(m_1,m_2,m_3,m_4)$ in the list: 
\begin{longtable}{llllll}
$(2, 2, 2, 2)$, & $(2, 2, 2, 4)$, & $(2, 2, 2, 6)$, & $(2, 2, 2, 10)$, & $(2, 2, 2, 12)$, & $(2, 2, 2, 14)$, \\ $(2, 2, 2, 20)$, & $(2, 2, 2, 22)$, & $(2, 2, 2, 24)$, & $(2, 2, 2, 26)$, & $(2, 2, 2, 28)$, & $(2, 2, 2, 30)$, \\ $(2, 2, 2, 36)$, & $(2, 2, 2, 42)$, & $(2, 2, 2, 84)$, & $(2, 2, 4, 4)$, & $(2, 2, 4, 8)$, & $(2, 2, 4, 12)$, \\ $(2, 2, 4, 16)$, & $(2, 2, 4, 20)$, & $(2, 2, 4, 24)$, & $(2, 2, 4, 28)$, & $(2, 2, 4, 40)$, & $(2, 2, 4, 48)$, \\ $(2, 2, 4, 60)$, & $(2, 2, 6, 6)$, & $(2, 2, 6, 12)$, & $(2, 2, 6, 18)$, & $(2, 2, 6, 24)$, & $(2, 2, 6, 30)$, \\ $(2, 2, 6, 36)$, & $(2, 2, 8, 8)$, & $(2, 2, 8, 24)$, & $(2, 2, 10, 10)$, & $(2, 2, 10, 20)$, & $(2, 2, 12, 12)$, \\ $(2, 2, 12, 24)$, & $(2, 2, 12, 36)$, & $(2, 2, 14, 14)$, & $(2, 2, 20, 20)$, & $(2, 4, 4, 4)$, & $(2, 4, 4, 8)$, \\ $(2, 4, 4, 12)$, & $(2, 4, 4, 20)$, & $(2, 4, 4, 24)$, & $(2, 4, 8, 8)$, & $(2, 4, 8, 16)$, & $(2, 4, 8, 24)$, \\ $(2, 4, 12, 12)$, & $(2, 4, 16, 16)$, & $(2, 6, 6, 6)$, & $(2, 6, 6, 12)$, & $(2, 6, 6, 18)$, & $(2, 6, 12, 12)$, \\ $(2, 8, 8, 8)$, & $(2, 10, 10, 10)$, & $(3, 3, 3, 3)$, & $(3, 3, 3, 6)$, & $(3, 3, 3, 9)$, & $(3, 3, 3, 15)$, \\ $(3, 3, 3, 18)$, & $(3, 3, 6, 6)$, & $(3, 3, 6, 12)$, & $(3, 3, 6, 18)$, & $(3, 3, 9, 9)$, & $(3, 3, 12, 12)$, \\ $(3, 6, 6, 6)$, & $(4, 4, 4, 4)$, & $(4, 4, 4, 8)$, & $(4, 4, 4, 12)$, & $(4, 4, 8, 8)$, & $(5, 5, 5, 5)$, \\ $(6, 6, 6, 6)$.
\end{longtable}
\end{itemize}

\end{document}